\newcommand{\G}{\ensuremath {\mathcal{G}} }
\newcommand{\Go}{\ensuremath {\mathcal{G}^{(0)}} }
\newcommand{\Ga}{\ensuremath {\mathcal{G}^{a}} }
\newcommand{\Lc}{\ensuremath {\mathcal{L}_c(X)} }
\newcommand{\Lco}{\ensuremath {\mathcal{L}_c(\Go)} }
\newcommand{\Lgo}{\ensuremath {\mathcal{L}_c(\Go)\rtimes \Ga} }
\newcommand{\Skew}{\mathcal{A} \rtimes_\pi S}
\newcommand{\A}{\mathcal{A}}
\newcommand{\J}{\mathcal{J}}
\newcommand{\F}{\mathcal{F}}
\DeclareMathOperator{\supp}{supp}
\DeclareMathOperator{\id}{id}
\DeclareMathOperator{\mini}{min}
\DeclareMathOperator{\Iso}{Iso}
\DeclareMathOperator{\Span}{Span}
\DeclareMathOperator{\Identity}{id}
\DeclareMathOperator{\Int}{int}
\theoremstyle{plain}
\newtheorem{theorem}{Theorem}[section]
\newtheorem{lemma}[theorem]{Lemma}
\newtheorem{prop}[theorem]{Proposition}
\newtheorem{corollary}[theorem]{Corollary}
\newtheorem{definition}[theorem]{Definition}
\newtheorem{remark}[theorem]{Remark}
\newtheorem{example}[theorem]{Example}
\begin{document}

\title[Simplicity of skew inverse semigroup rings]{Simplicity of skew inverse semigroup rings \\ with applications to Steinberg algebras \\ and topological dynamics}

\author{Viviane Beuter}
\address{Departamento de Matem\'{a}tica, Universidade Federal de Santa Catarina, Florian\'{o}polis, BR-88040-900, Brasil
and
Departamento de Matem\'{a}tica, Universidade do Estado de Santa Catarina, Joinville, BR-89219-710, Brasil}
\email{vivibeuter@gmail.com}

\author{Daniel Gon\c{c}alves}
\address{Departamento de Matem\'{a}tica, Universidade Federal de Santa
Catarina, Florian\'{o}polis, BR-88040-900, Brasil}
\email{daemig@gmail.com}

\author{Johan \"{O}inert}
\address{Department of Mathematics and Natural Sciences, Blekinge Institute of Technology,
SE-37179 Karlskrona, Sweden}
\email{johan.oinert@bth.se}

\author{Danilo Royer}
\address{Departamento de Matem\'{a}tica, Universidade Federal de Santa
Catarina, Florian\'{o}polis, BR-88040-900, Brasil}
\email{daniloroyer@gmail.com}

\subjclass[2010]{16S99, 16W22, 16W55, 22A22, 37B05}

\keywords{skew inverse semigroup ring, Steinberg algebra, partial action, inverse semigroup}

\date{\today}

\begin{abstract}
Given a partial action $\pi$ of an inverse semigroup $S$ on a ring $\A$ one may construct its associated skew inverse semigroup ring $\Skew$.
Our main result asserts that, when $\A$ is commutative, the ring $\Skew$ is simple if, and only if, $\A$ is a maximal commutative subring of $\Skew$ and $\A$ is $S$-simple. We apply this result in the context of topological inverse semigroup actions to connect simplicity of the associated skew inverse semigroup ring with topological properties of the action. Furthermore, we use our result to present a new proof of the simplicity criterion for a Steinberg algebra $A_R(\G)$ associated with a Hausdorff and ample groupoid $\G$.
\end{abstract}

\maketitle

\section{Introduction}\label{Sec:Intro}

The notion of a partial action of a group on a C*-algebra, and the construction of its associated crossed product C*-algebra (initially introduced by Exel \cite{exel1994a}), is a key ingredient in the study of many C*-algebras, e.g. Cuntz-Krieger algebras \cite{exellaca}, Cuntz-Li algebras \cite{boavaexel}, graph C*-algebras \cite{toke}, ultragraph C*-algebras \cite{GRUltra, GRIRMN}, and algebras associated with Bratteli diagrams \cite{GGS, GRHouston}, to name a few.

In a purely algebraic context, partial skew group rings were introduced by Dokuchaev and Exel \cite{dokuchaevexel} as a generalization of classical skew group rings and as an algebraic analogue of partial crossed product C*-algebras. The theory of partial skew group rings, which is still quite young, is less developed than its analytical counterpart, but it has evolved quickly in recent years. In particular many important algebras, such as Leavitt path algebras \cite{goncalvesroyer} and ultragraph Leavitt path algebras \cite{GRUP}, can be realized as partial skew group rings and general results about the theory, as the ones in \cite{beuter1,Dokuchaev,Gonc,GOR,OinertArt}, have been applied to study these algebras (see \cite{Misha} for a comprehensive overview of developments in the theory of partial actions). This recent development of the area indicates that the theory of non-commutative rings may benefit from the theory of partial skew group rings.

In this article we shall be concerned with \emph{skew inverse semigroup rings}. This class of rings was introduced by Exel and Vieira (see e.g. \cite{beuter2,exelvieira}) and generalizes the class of partial skew group rings (see \cite[Theorem 3.7]{exelvieira}). Our interest to study this class of rings comes from its connections with topological dynamics (see Section~\ref{topd}), and the fact that any Steinberg algebra, associated with a Hausdorff and ample groupoid, can be realized as a skew inverse semigroup ring (see \cite{beuter2}). Recall that Steinberg algebras were introduced by Steinberg in \cite{steinberg} and, independently, by Clark et al. in \cite{clarketal} (see Section~\ref{Sec:Steinberg} for the definition). They are ''algebraisations'' of Renault’s C*-algebras of groupoids. Lately, Steinberg algebras have attracted a lot of attention, partly since they include all Kumjian-Pask algebras of higher-rank graphs introduced in \cite{arandapinoetal} and therefore also all Leavitt path algebras. For some examples of the theory of Steinberg algebras, we refer the reader to \cite{clarkediemichell,clarkexelpardo,clarksims,steinberg2014}. 

The interplay between topological dynamics and crossed products algebras is a driving force in the field of C*-algebras and has motivated the study of relations between topological dynamics and purely algebraic objects (as Steinberg algebras). By applying our main results we can describe connections between simplicity of the skew inverse semigroup ring associated with a topological partial action and topological properties of the action. The techniques we employ here are quite different from the ones used in \cite{GOR}.

This article is organized as follows. In Section~\ref{Sec:background}, we recall some important properties of partial actions of inverse semigroups and the construction of the skew inverse semigroup ring. For the latter a quotient by an ideal, which we denote by $\mathcal{N}$, is necessary. We provide a description of this ideal in Lemma~\ref{Idescription}.
In Section~\ref{Sec:Simplicity}, 
we define the \emph{diagonal} of $\Skew$, denoted by $\mathcal{D}$, which is isomorphic to $\A$ (see Proposition~\ref{prop:DisoA}).
Furthermore,
we prove our main result, which yields a complete characterization of simplicity of skew inverse semigroup rings in the case when $\A$ is commutative (see Theorem~\ref{thm:main1}). 
In Section~\ref{topd} we apply our result in the context of topological dynamics: Given an topological partial action of an inverse semigroup on a zero-dimensional locally compact Hausdorff space, we show that the associated skew inverse semigroup ring is simple if, and only if, the action is minimal, topologically principal and a certain condition on the existence of functions with non-empty support in ideals of the skew inverse semigroup ring holds. (The aforementioned condition has the same flavour as the one presented in \cite{galera} for groupoids. We were not aware of the work in \cite{galera} while developing Section~\ref{topd}.) 
Finally, in Section~\ref{Sec:Steinberg}, based on the above mentioned work of Beuter and Gon\c{c}alves \cite{beuter2}, we apply our main result to get a new proof of the simplicity criterion for a Steinberg algebra $A_R(\G)$ associated with a Hausdorff and ample groupoid $\G$ (see Theorem~\ref{thm:SteinbergSimplicity}).

\section{Preliminaries}\label{Sec:background}

\subsection{Inverse semigroups and their partial actions}\label{subsec:partialactions}

Throughout this article $S$ will denote an \emph{inverse semigroup}.
Recall that this means that $S$ is a semigroup such that for each element $s\in S$
there is a unique element $s^* \in S$ satisfying $ss^*s=s$ and $s^*ss^*=s$.
We denote by $E(S)=\{e \in S \mid e^2=e\}$ the set of idempotents of $S$. 
$E(S)$ is a commutative sub-inverse semigroup of $S$ and it is moreover a lattice under the partial order:
$e \leq f $ if and only if $e = ef$. This partial order can be extended to $S$ by putting
\begin{displaymath}
	s \leq t \quad \iff \quad s=t s^*s \quad \iff \quad s = ss^* t,
\end{displaymath}
for any $s,t\in S$.
Products and inverses in $S$ 
are preserved by this partial order
in the sense that
\begin{itemize}
\item if $s\leq t$ and $u \leq v$, then $su \leq tv$;
\item if $s\leq t$, then $s^*\leq t^*$.
\end{itemize}
It is easy to see that $es \leq s$ and $se \leq s,$ for any $s \in S$ and $e \in E(S).$ In particular, if $S$ is unital then $e\leq 1$, for any $e\in E(S)$.

We now introduce the type of partial action that we shall be concerned with in this article.

\begin{definition}\label{def:partialaction}
A \emph{partial action of an inverse semigroup $S$ on a ring $\A$} is a collection of ideals 
$\{D_s\}_{s\in S}$ of $\A$ and ring isomorphisms $\{\pi_s : D_{s^*} \to D_s \}_{s\in S}$ such that,
for any $s,t\in S$, the following three assertions hold:
\begin{enumerate}[{\rm (i)}]
	\item\label{defi} $\A$, viewed as an additive group, is generated by the set $\bigcup_{e \in E(S)} D_e;$
	\item\label{defii} $\pi_s( D_{s^*} \cap D_t ) = D_s \cap D_{st}$;
	\item\label{defiii} $\pi_{s}(\pi_{t}(x))=\pi_{st}(x)$, for all $x\in D_{t^*} \cap D_{t^*s^*}$.
\end{enumerate}
\end{definition}

The following proposition shows how Definition~\ref{def:partialaction}
is related to e.g. \cite[Definition 2.4]{beuter2} and \cite[Definition 3.3]{exelbuss}.

\begin{prop}\label{prop:partialaction}
Let $( \{ \pi_s \}_{s\in S}, \{D_s\}_{s\in S} )$ be a partial action of $S$ on a ring $\A$,
as defined in Definition~\ref{def:partialaction}.
The following assertions hold:
\begin{enumerate}[{\rm (a)}]
	\item\label{prop:sss} $D_s \subseteq D_{ss^*}$, for any $s\in S$;
	\item\label{prop:PIidempotent} $\pi_e = \id_{D_e}$, for any $e\in E(S)$;
	\item\label{prop:inverse} $\pi_s^{-1} = \pi_{s^*}$, for any $s\in S$;
	\item\label{prop:posetstructure} Let $s,t\in S$. If $s\leq t$, then $D_s \subseteq D_t$;
	\item\label{prop:restriction} Let $s,t\in S.$ If $s \leq t,$ then $\pi_s(x)=\pi_t(x),$ for all $x \in D_{s^*}.$
	\item\label{prop:X1} If $S$ is unital, then $D_1 = \A$ and $\pi_1=\id_{\A}$;	
\end{enumerate}
\end{prop}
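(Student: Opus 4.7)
The plan is to prove the six items sequentially, each one relying only on the preceding items plus Definition~\ref{def:partialaction}. For (a), I would apply condition (ii) with $t$ replaced by $s^*$, which gives $\pi_s(D_{s^*}\cap D_{s^*}) = D_s\cap D_{ss^*}$; since the left-hand side is just $\pi_s(D_{s^*})=D_s$, this forces $D_s\subseteq D_{ss^*}$. For (b), I first recall that every $e\in E(S)$ is self-adjoint ($e^*=e$), which follows from uniqueness of the generalized inverse since $e\cdot e\cdot e=e$. Applying (iii) with $s=t=e$ on the domain $D_e\cap D_e=D_e$ yields $\pi_e\circ\pi_e=\pi_{e^2}=\pi_e$, and bijectivity of $\pi_e$ forces $\pi_e=\id_{D_e}$. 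For (c), I would apply (iii) with $t$ replaced by $s^*$: for $x\in D_{s^*}\cap D_{s^*s}$ we have $\pi_{s^*}(\pi_s(x))=\pi_{s^*s}(x)$; but (a) gives $D_{s^*}\subseteq D_{s^*(s^*)^*}=D_{s^*s}$, so the domain is all of $D_{s^*}$, and (b) identifies the right-hand side with $x$. A symmetric application yields $\pi_s\circ\pi_{s^*}=\id_{D_s}$.

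For (d), assume $s\leq t$, so $s=ss^*t$. Applying (ii) with the idempotent $ss^*$ in place of $s$ and keeping $t$, I would obtain $\pi_{ss^*}(D_{ss^*}\cap D_t)=D_{ss^*}\cap D_{ss^*t}=D_{ss^*}\cap D_s$, which equals $D_s$ by (a). By (b), the left-hand side is just $D_{ss^*}\cap D_t$, so $D_s=D_{ss^*}\cap D_t\subseteq D_t$. For (e), I would first note that $s\leq t$ implies $s^*\leq t^*$, so by (d) already proved, $D_{s^*}\subseteq D_{t^*}$ and $\pi_t(x)$ is defined for every $x\in D_{s^*}$. Using commutativity of idempotents in $E(S)$ together with the two forms $s=ss^*t=ts^*s$, I would verify the identities $t^*ss^*=s^*$ and $tt^*ss^*=ss^*$; condition (ii) then gives $\pi_t(D_{s^*})\subseteq D_{ss^*}$, so that the compatibility requirement for (iii) is met. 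Applying (iii) with the first factor replaced by the idempotent $ss^*$ yields $\pi_{ss^*}(\pi_t(x))=\pi_{ss^*t}(x)=\pi_s(x)$ on the domain $D_{s^*}$, and (b) collapses the left-hand side to $\pi_t(x)$, giving the desired equality.

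Finally, item (f) is essentially immediate: if $S$ is unital then $1\in E(S)$ and $e\leq 1$ for every $e\in E(S)$, so by (d) each $D_e$ sits inside $D_1$; condition (i) of Definition~\ref{def:partialaction} then forces $D_1=\A$, since $D_1$ is additively closed, and $\pi_1=\id_\A$ then follows from (b). I expect the main obstacle to be (e), where one must juggle both equivalent forms of the partial order together with commutativity of $E(S)$ to verify that the compatibility conditions of axiom (iii) are actually satisfied; once these idempotent identities are in place, the rest reduces to combining (ii), (iii), and (b) mechanically. Items (a)--(c) are essentially one-line substitutions into the axioms, while (d) and (f) amount to routine bookkeeping built on top of (a) and (b).
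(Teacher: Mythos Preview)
Your proposal is correct and follows essentially the same strategy as the paper's proof: each item is derived from the axioms and the previously established items, with (a)--(c) being direct substitutions and (d)--(f) built on top. The only noteworthy differences are in (d) and (e). For (d), you invoke axiom (ii) with the idempotent $ss^*$ to obtain the stronger identity $D_s=D_{ss^*}\cap D_t$, whereas the paper instead takes $y\in D_s$, writes $s=ts^*s$, and uses axiom (iii) to show $y=\pi_t(\pi_{s^*s}(\pi_s^{-1}(y)))\in D_t$. For (e), you factor $s=ss^*t$ and read off $\pi_s=\pi_{ss^*}\circ\pi_t$ on $D_{s^*}$, while the paper factors $s=ts^*s$ and reads off $\pi_s=\pi_t\circ\pi_{s^*s}$; both factorizations collapse via (b). These are symmetric variants of the same idea, and neither requires more machinery than the other.
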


\begin{proof}
\eqref{prop:sss}: From Definition~\ref{def:partialaction}\eqref{defii} we get that
\begin{displaymath}
	D_s = \pi_{s}(D_{s^*} \cap D_{s^*}) = D_s \cap D_{ss^*} \subseteq D_{ss^*}.
\end{displaymath}

\eqref{prop:PIidempotent}:
Notice that $e=e^*$ for each $e\in E(S)$.
Take any $x \in D_{e^*} \cap D_{(e^*)^2} = D_{e^*} = D_e$.
By Definition~\ref{def:partialaction}\eqref{defiii}
we get that
\begin{displaymath}
	\pi_e(\pi_e(x))=\pi_{e^2}(x) = \pi_e(x)
\end{displaymath}
which shows that $\pi_e = \id_{D_e}$.

\eqref{prop:inverse}:
Take any $x\in D_s$. By \eqref{prop:sss} we get that $x\in D_{ss^*} = D_{(ss^*)^*}$
and hence, by Definition~\ref{def:partialaction}\eqref{defiii}, we may write
\begin{displaymath}
	\pi_s ( \pi_{s^*} (x) ) = \pi_{ss^*}(x).
\end{displaymath}
Using the fact that $ss^* \in E(S)$ we get, by \eqref{prop:PIidempotent},
that $\pi_s ( \pi_{s^*} (x) ) = x$.
Analogously one may show that $\pi_{s^*} ( \pi_{s} (x) ) = \pi_{s^*s}(x)=x$, for all $x\in D_{s^*}$.
This shows that $\pi_s^{-1} = \pi_{s^*}$.

\eqref{prop:posetstructure}:
Take any $y \in D_s$. Put $x=\pi_{s}^{-1}(y) \in D_{s^*}$.
Using that $s \leq t$ we get that $s=ts^*s$.
By \eqref{prop:sss} we notice that $x \in D_{s^*} \cap D_{s^*s} = D_{(ts^*s)^*} \cap D_{(s^*s)^*}$.
Hence, using Definition~\ref{def:partialaction}\eqref{defiii}
we get that
\begin{displaymath}
	y= \pi_s(x) = \pi_{ts^*s}(x) = \pi_t( \pi_{s^*s}( x ) ) \in D_t.
\end{displaymath}
This shows that $D_s \subseteq D_t$.

\eqref{prop:restriction}: 
We have that  $s^* \leq t^*$ and by \eqref{prop:posetstructure} we get that $D_{s^*} \subseteq D_{t^*}.$ 
Moreover, $s=ts^*s$ and $s^*=s^*st^*.$ For every $x \in D_{s^*}$ we have that $x \in  D_{s^*s}\cap D_{s^*st^*}$.
Hence, by Definition~\ref{def:partialaction}\eqref{defiii}, we conclude that
\begin{displaymath}
   \pi_s(x)=\pi_{ts^*s}(x)=\pi_t(\pi_{s^*s}(x))=\pi_t(x).
\end{displaymath}

\eqref{prop:X1}: We know that if $S$ is unital, then $e\leq 1$ for every $e\in E(S).$ By \eqref{prop:sss} we get that 
$D_s \subseteq D_{ss^*} \subseteq D_1$. Hence, Definition~\ref{def:partialaction}\eqref{defi} yields that
$\A=D_1$ and by \eqref{prop:PIidempotent} we get that $\pi_1=\id_{\A}.$
\end{proof}

\subsection{The skew inverse semigroup ring}\label{subsec:partialskewinversesemigroupring}

Given a partial action $(\{ \pi_s \}_{s\in S}, \{D_s\}_{s\in S} )$ of $S$ on a ring $\A$,
the construction of the corresponding skew inverse semigroup ring is done in three steps.
\begin{enumerate}
	\item First we consider the set
\begin{displaymath}
	\mathcal{L} = \left\{ \sum_{s\in S}^{\text{finite}} a_s \delta_s \ \Bigr\lvert \ a_s \in D_s \right\}
\end{displaymath}
where $\delta_s$, for $s\in S$, is a formal symbol.
We equip $\mathcal{L}$ with component-wise addition and with a multiplication 
defined as the linear extension of the rule
\begin{displaymath}
	(a_s \delta_s)(b_t \delta_t) = \pi_{s}(\pi_{s^*}(a_s) b_t) \delta_{st}.
\end{displaymath}
If we assume that $\A$ is associative, which we will, then using the assumption that $\A$ and each $D_s$, $s\in S$, have local units\footnote{Recall that a ring $R$ is said to have \emph{local units} if, for every finite subset $\F$ of $R$, there exists an idempotent $f\in R$ such that $\F \subseteq fRf$. In this case, $x=fx=xf$ holds for each $x\in \F$ and the element $f$ will be referred to as a \emph{local unit} for the set $\F$.}  one can show that $\mathcal{L}$ is an associative ring (see \cite[Theorem 3.4]{exelvieira}).

	\item Then, we consider the ideal
	\begin{displaymath}
	\mathcal{N} = \langle a \delta_r - a \delta_s \mid r,s \in S, \ r \leq s \text{ and } a \in D_r \rangle,
\end{displaymath}
i.e. $\mathcal{N}$ is the ideal of $\mathcal{L}$ generated by all elements of the form $a\delta_r - a\delta_s$, where $r\leq s$ and $a \in D_r$. (Notice that $a\in D_s$, by Proposition~\ref{prop:partialaction}\eqref{prop:posetstructure}.)

	\item Finally, we define the corresponding
	\emph{skew inverse semigroup ring}, which we denote by $\Skew$, as the quotient ring $\mathcal{L}/\mathcal{N}$.
Elements of $\Skew$ will be written as $\overline{x}$, where $x  \in \mathcal{L}$.
\end{enumerate}

It is not difficult to see that the construction of $\mathcal{L}$ yields 
a ring which is a generalization of a partial skew group ring \cite{dokuchaevexel}.
But in fact, one can also show that even $\Skew$ is a 
generalization of a partial skew group ring.
Indeed, given a partial skew group ring $\A \star_\alpha G$ one may define 
\emph{Exel's semigroup} $S_G$ (see e.g. \cite{exel1998}) associated with the group
$G$ and construct a certain skew inverse semigroup ring $\A \rtimes_\pi S_G$
which is isomorphic to $\A \star_\alpha G$ (see \cite[Theorem 3.7]{exelvieira}).

Next we describe the ideal $\mathcal{N}$ which was used above to define $\Skew$.

\begin{lemma}\label{Idescription} 
The ideal $\mathcal{N}$ is equal to the additive group generated by the set
$\{a\delta_r-a\delta_s \mid r,s \in S, \ r\leq s \text{ and }a\in D_r\}$.
\end{lemma}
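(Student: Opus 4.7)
The plan is to prove the non-trivial inclusion by showing that the additive subgroup
\[
H := \Bigl\langle \{ a\delta_r - a\delta_s \mid r,s \in S,\ r \leq s,\ a \in D_r \} \Bigr\rangle_{\text{add}}
\]
is already a two-sided ideal of $\mathcal{L}$. Since the opposite inclusion $H \subseteq \mathcal{N}$ is immediate from the definitions, equality follows. By additivity and distributivity of the multiplication, it suffices to check that a product $(b\delta_t)(a\delta_r - a\delta_s)$ and a product $(a\delta_r - a\delta_s)(b\delta_t)$ lies in $H$ for an arbitrary generator $a\delta_r - a\delta_s$ (with $r \leq s$, $a \in D_r$) and any $b \in D_t$, $t \in S$.

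For the left action, I would observe that $\pi_{t^*}(b) \cdot a$ is a single element of $\A$, independent of whether $a$ is regarded as lying in $D_r$ or in the larger ideal $D_s$ (using Proposition~\ref{prop:partialaction}\eqref{prop:posetstructure}). The multiplication rule in $\mathcal{L}$ then gives
\[
(b\delta_t)(a\delta_r - a\delta_s) = \pi_t\bigl(\pi_{t^*}(b) a\bigr)\bigl(\delta_{tr} - \delta_{ts}\bigr).
\]
Since $\pi_{t^*}(b) a \in D_{t^*} \cap D_r$, Definition~\ref{def:partialaction}\eqref{defii} gives $\pi_t(\pi_{t^*}(b)a) \in D_t \cap D_{tr} \subseteq D_{tr}$; and since $r \leq s$ and the partial order is preserved by products, $tr \leq ts$. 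Hence the right-hand side is a generator of $H$.

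For the right action, the computation gives
\[
(a\delta_r - a\delta_s)(b\delta_t) = \pi_r\bigl(\pi_{r^*}(a) b\bigr)\delta_{rt} - \pi_s\bigl(\pi_{s^*}(a) b\bigr)\delta_{st}.
\]
Here I would apply Proposition~\ref{prop:partialaction}\eqref{prop:restriction} twice. First, since $r \leq s$ implies $r^* \leq s^*$ and $a \in D_r$, one has $\pi_{r^*}(a) = \pi_{s^*}(a)$; call this common element $c \in D_{r^*}$. Then $cb \in D_{r^*} \cap D_t$ (an ideal), and the second application of Proposition~\ref{prop:partialaction}\eqref{prop:restriction} (with $r \leq s$ and $cb \in D_{r^*}$) gives $\pi_r(cb) = \pi_s(cb)$. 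The expression thus collapses to $\pi_r(cb)\bigl(\delta_{rt} - \delta_{st}\bigr)$, which is a generator of $H$ since $rt \leq st$ and $\pi_r(cb) \in D_r \cap D_{rt} \subseteq D_{rt}$.

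The main obstacle I anticipate is bookkeeping of domains: verifying that each intermediate product lies in the intersection of ideals required by the multiplication formula and, crucially, that the output of $\pi_t$ (resp.\ $\pi_r$) lands in the strictly smaller ideal $D_{tr}$ (resp.\ $D_{rt}$) so that the resulting difference is of the form $x\delta_p - x\delta_q$ with $p \leq q$ and $x \in D_p$. This is exactly what Definition~\ref{def:partialaction}\eqref{defii} and Proposition~\ref{prop:partialaction}\eqref{prop:restriction} are designed to control.
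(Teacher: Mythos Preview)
Your proposal is correct and follows essentially the same route as the paper: reduce to showing that the additive span of the generators is already a two-sided ideal, then verify closure under left and right multiplication by a homogeneous element using exactly the same computations (the multiplication formula, the order relation $tr\leq ts$ resp.\ $rt\leq st$, and two applications of Proposition~\ref{prop:partialaction}\eqref{prop:restriction} for the right action). The paper additionally treats the three-factor product $b\delta_t(a\delta_r-a\delta_s)c\delta_u$ explicitly, but as you implicitly use, this is redundant once left and right closure are established.
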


\begin{proof}
It is enough to show that for $r,s,t,u\in S$ with $r\leq s$, and $a\in D_r$, $b\in D_t$, $c\in D_u$, it holds that the elements $b\delta_t(a\delta_r-a\delta_s)$, $(a\delta_r-a\delta_s)c\delta_u$ and $b\delta_t(a\delta_r-a\delta_s)c\delta_u$ are all of the form $x\delta_v-x\delta_w$ for some $v,w\in S$ and $x\in D_v$, such that $v\leq w$. Notice that 
\begin{displaymath}
	b\delta_t(a\delta_r-a\delta_s)=b\delta_ta\delta_r-b\delta_ta\delta_s=\pi_t(\pi_{t^*}(b)a)\delta_{tr}-\pi_t(\pi_{t^*}(b)a)\delta_{ts}
\end{displaymath}
and, since $tr\leq ts$, we are done in this case.

In the next case we get
\begin{displaymath}
	(a\delta_r-a\delta_s)c\delta_u=\pi_r(\pi_{r^*}(a)c)\delta_{ru}-\pi_s(\pi_{s^*}(a)c)\delta_{su}.
\end{displaymath}
Using that $r\leq s$ we get that $r^*\leq s^*$ and $ru\leq su$.
By Proposition~\ref{prop:partialaction}\eqref{prop:restriction},
$\pi_{r^*}(a)=\pi_{s^*}(a)$ and $\pi_r(\pi_{r^*}(a)c)=\pi_r(\pi_{s^*}(a)c)=\pi_s(\pi_{s^*}(a)c)$ and hence the desired conclusion follows. 

Finally, notice that $b\delta_t(a\delta_r-a\delta_s)c\delta_u$ is of the form $(x\delta_{tr}-x\delta_{ts})c\delta_u$ by the first case, and now, by the second case, $(x\delta_{tr}-x\delta_{ts})c\delta_u$ has the desired form.
\end{proof}

\begin{remark}\label{rem:calc}
Let $s,t\in S$. Notice that if $s \leq t$ and $a \in D_s$, then $\overline{a\delta_s}=\overline{a\delta_t}.$
\end{remark}

Just as for the skew group rings, we may define an additive map $\tau : \mathcal{L} \to \A$ defined by
\begin{displaymath}
\tau \left( \sum_{s\in S} a_s \delta_s \right)= \sum_{s\in S} a_s.
\end{displaymath}

\begin{remark} \label{tau}
By Lemma~\ref{Idescription}, we have that $\tau(\mathcal{N})=\{0\}$ and hence we get a well-defined additive map $\widetilde{\tau}: \A \rtimes_\pi S\rightarrow \A$ defined by $\widetilde{\tau}(\overline{x})=\tau(x),$ for $x \in \mathcal{L}$.
\end{remark}

\begin{lemma}\label{intersection} 
Let $H\subseteq \mathcal{L}$ be the set of all homogeneous elements of $\mathcal{L}$, that is, $H=\{a_s\delta_s \mid s\in S \text{ and }a_s\in D_s\}$. Then $H\cap \mathcal{N}=\{0\}$.
\end{lemma}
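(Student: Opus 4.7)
The plan is to simply apply the additive map $\tau : \mathcal{L} \to \A$ introduced just before the lemma, exploiting the fact (already observed in Remark~\ref{tau}) that $\tau$ vanishes on $\mathcal{N}$.

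Concretely, suppose $a_u \delta_u \in H \cap \mathcal{N}$ for some $u \in S$ and $a_u \in D_u$. First, directly from the definition of $\tau$ we have $\tau(a_u \delta_u) = a_u$. Second, by Lemma~\ref{Idescription}, any element of $\mathcal{N}$ can be written as a finite sum $\sum_i (a_i \delta_{r_i} - a_i \delta_{s_i})$ with $r_i \leq s_i$ and $a_i \in D_{r_i}$, and each summand satisfies $\tau(a_i\delta_{r_i} - a_i\delta_{s_i}) = a_i - a_i = 0$; since $\tau$ is additive, it follows that $\tau(\mathcal{N}) = \{0\}$ (this is exactly the content of Remark~\ref{tau}). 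Combining these two observations, $a_u = \tau(a_u\delta_u) = 0$, so $a_u \delta_u = 0$.

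There is really no obstacle here: the lemma is essentially a sanity check confirming that the quotient $\mathcal{L}/\mathcal{N}$ does not collapse any single homogeneous term, and the map $\tau$ was set up precisely for this purpose. The only point worth being careful about is verifying that $\tau$ kills every generator of $\mathcal{N}$, which is immediate from the form $a\delta_r - a\delta_s$ described in Lemma~\ref{Idescription}.
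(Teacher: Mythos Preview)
Your proof is correct and is essentially identical to the paper's own argument: both apply $\tau$ to a homogeneous element $a_s\delta_s\in H\cap\mathcal{N}$ and use Lemma~\ref{Idescription} (equivalently Remark~\ref{tau}) to conclude $a_s=\tau(a_s\delta_s)=0$.
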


\begin{proof}
Take $x\in H\cap \mathcal{N}$. Then $x=a_s\delta_s$ for some $s\in S$ and $a_s\in D_s$. By applying $\tau$ on $x$ we get that $a_s=\tau(a_s\delta_s)=\tau(x)=0$, where the last equality follows from Lemma~\ref{Idescription}. This shows that $x=0$.
\end{proof}

\section{Simplicity of skew inverse semigroup rings}\label{Sec:Simplicity}

Throughout this section we shall make the following assumption: Any given  partial action $( \{ \pi_s \}_{s\in S}, \{D_s\}_{s\in S} )$ of $S$ on a ring $\A$ has the property that $\A$ and each ideal $D_s$, for $s\in S$, have local units.

We define the \emph{diagonal} of $\Skew$ as the following set:
\begin{displaymath}
	\mathcal{D}= \left\{ \sum_{i=1}^n\overline{a_i \delta_{e_i}}\ \Big\lvert \ n\in \mathbb{Z}_+, \ e_i\in E(S), \ a_i \in D_{e_i} \right\}.
\end{displaymath}

\begin{prop}\label{prop:DisoA}
Let $\A$ be a commutative and associative ring. Then $\A$ is embedded in $\Skew$ and is isomorphic to $\mathcal{D}$, which is a commutative subring of $\Skew$.
\end{prop}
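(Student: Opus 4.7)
The plan is to build an explicit ring isomorphism $\phi : \A \to \mathcal{D}$. Given $a \in \A$, I use Definition~\ref{def:partialaction}\eqref{defi} to write $a = \sum_{i=1}^n a_i$ with $a_i \in D_{e_i}$ and $e_i \in E(S)$, and set $\phi(a) = \sum_i \overline{a_i \delta_{e_i}}$. Once $\phi$ is shown to be a well-defined ring isomorphism, the embedding $\A \hookrightarrow \Skew$ is $\phi$ followed by the inclusion $\mathcal{D} \hookrightarrow \Skew$, and commutativity of $\mathcal{D}$ is inherited from $\A$ via $\phi$.

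The main obstacle is well-definedness: if $\sum_i a_i = 0$ in $\A$ with $a_i \in D_{e_i}$, then $\sum_i \overline{a_i \delta_{e_i}} = 0$ in $\Skew$. My plan is to exploit local units. Pick a local unit $v \in \A$ for $\{a_1,\ldots,a_n\}$, so $v a_i = a_i$ for each $i$, and decompose $v = \sum_k v_k$ with $v_k \in D_{g_k}$, $g_k \in E(S)$. Then each $v_k a_i$ lies in $D_{g_k} \cap D_{e_i}$, and a direct check using Definition~\ref{def:partialaction}\eqref{defii} (with $e_i^* = e_i$), Proposition~\ref{prop:partialaction}\eqref{prop:PIidempotent}, and \eqref{prop:posetstructure} shows that $D_{g_k} \cap D_{e_i} = D_{g_k e_i}$. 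Since $g_k e_i \leq g_k$, Remark~\ref{rem:calc} gives $\overline{v_k a_i\, \delta_{g_k e_i}} = \overline{v_k a_i\, \delta_{g_k}}$, so reshuffling the double sum yields
\[
   \sum_i \overline{a_i \delta_{e_i}} \;=\; \sum_i \sum_k \overline{v_k a_i\, \delta_{g_k}} \;=\; \sum_k \overline{v_k \Bigl(\sum_i a_i\Bigr) \delta_{g_k}} \;=\; 0.
\]

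Once well-definedness is secured, the rest is routine. Additivity of $\phi$ is immediate. For multiplicativity, the computation $\overline{a_i \delta_{e_i}} \cdot \overline{b_j \delta_{f_j}} = \overline{\pi_{e_i}(\pi_{e_i}(a_i) b_j) \delta_{e_i f_j}} = \overline{a_i b_j\, \delta_{e_i f_j}}$ (using $a_i b_j \in D_{e_i} \cap D_{f_j} = D_{e_i f_j}$ and $\pi_{e_i}|_{D_{e_i}} = \id$) reduces $\phi(ab) = \phi(a)\phi(b)$ to the identity $ab = \sum_{i,j} a_i b_j$ decomposed over $D_{e_i f_j}$. Surjectivity of $\phi$ onto $\mathcal{D}$ is clear from the definition of $\mathcal{D}$, and injectivity follows from $\widetilde{\tau} \circ \phi = \id_\A$, where $\widetilde{\tau}$ is the map of Remark~\ref{tau}. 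Finally, the same product formula combined with the commutativity of $\A$ and of $E(S)$ gives $\overline{a\delta_e}\cdot\overline{b\delta_f} = \overline{ab\,\delta_{ef}} = \overline{ba\,\delta_{fe}} = \overline{b\delta_f}\cdot\overline{a\delta_e}$, establishing that $\mathcal{D}$ is a commutative subring of $\Skew$.
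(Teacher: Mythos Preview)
Your proof is correct. The overall architecture matches the paper's: define $\phi(a)=\sum_i\overline{a_i\delta_{e_i}}$, verify well-definedness, check multiplicativity via Proposition~\ref{prop:partialaction}\eqref{prop:PIidempotent}, and establish injectivity through $\widetilde\tau\circ\phi=\id_\A$.

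The one genuine difference is the well-definedness step. The paper argues by induction on the number of summands: assuming $a_f+\sum_{i=1}^n a_{e_i}=0$, it multiplies by $u-u_f$ (where $u$ is a local unit for all terms and $u_f\in D_f$ is a local unit for $a_f$) to strip off the $a_f$ term, applies the induction hypothesis to the remaining $n$ terms, and then reassembles. Your argument avoids induction: you decompose a single global local unit $v=\sum_k v_k$ with $v_k\in D_{g_k}$ and use the relations $g_ke_i\leq e_i$ and $g_ke_i\leq g_k$ (together with $D_{g_k}\cap D_{e_i}=D_{g_ke_i}$) to rewrite every $\overline{v_ka_i\,\delta_{e_i}}$ uniformly as $\overline{v_ka_i\,\delta_{g_k}}$; summing over $i$ first then collapses everything to zero in one step. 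Your route is shorter and arguably cleaner; the paper's inductive argument is more pedestrian but needs only local units inside the individual $D_e$'s rather than a decomposition of a global local unit. One small presentational point: in your displayed chain you only cite $g_ke_i\leq g_k$, but the passage from $\overline{a_i\delta_{e_i}}$ to $\sum_k\overline{v_ka_i\,\delta_{g_k}}$ also tacitly uses $g_ke_i\leq e_i$ via Remark~\ref{rem:calc}; making both inequalities explicit would remove any doubt.
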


\begin{proof}
It is easy see that $\mathcal{D}$ is a subring of $\Skew$ because, by Proposition~\ref{prop:partialaction}\eqref{prop:PIidempotent},
\begin{displaymath}
	\left(\sum_{i=1}^n\overline{a_i\delta_{e_i}}\right)\left(\sum_{j=1}^m\overline{b_j\delta_{f_j}}\right)=\sum_{i=1}^{n} \sum_{j=1}^{m} \overline{a_ib_j\delta_{e_if_j}} \ \in \mathcal{D}.
\end{displaymath}
Commutativity of $\mathcal{D}$ follows from the fact that $\A$ and $E(S)$ are commutative.

Next we show that $\mathcal{D}$ is isomorphic to $\A$. Notice that, by Definition~\ref{def:partialaction}(\ref{defi}), given $a \in \A$ we can write $a=\sum_{i=1}^n a_{e_i}$, where $n\in \mathbb{Z}_+$, $e_i \in E(S)$, and $a_{e_i} \in D_{e_i}$ for each $i\in \{1,\ldots,n\}$. Let $\phi: \A  \rightarrow \mathcal{D}$ be the map defined by 
\begin{displaymath}
	\phi(a)=\sum_{i=1}^n\overline{a_{e_i}\delta_{e_i}},
\end{displaymath}
for $a=\sum_{i=1}^n a_{e_i} \in \A$. Clearly, $\phi$ is additive.

We prove by induction that $\phi$ is well-defined. More precisely, we will show that if $\sum_{i=1}^n a_{e_i}=0$ for $n\in \mathbb{Z}_+$, $e_i\in E(S)$, and $a_{e_i} \in D_{e_i}$ for $i \in \{1,\ldots,n\}$, then $\sum_{i=1}^n\overline{a_{e_i}\delta_{e_i}}=0$.

If $a_{e_1}=0$, then clearly $\overline{a_{e_1}\delta_{e_1}}=0$.
Let $n\in \mathbb{Z}_+$ be arbitrary. As our induction hypothesis, suppose that if $\sum_{i=1}^n a_{e_i}=0$, then $\sum_{i=1}^n\overline{a_{e_i}\delta_{e_i}}=0$.

Now, let $f \in E(S)$, $a_{f} \in D_f$, and suppose that $a_f+\sum_{i=1}^n a_{e_i}=0$. Let $u \in \A$ be a local unit for $a_f, a_{e_1}, \ldots, a_{e_n}$, and let $u_f \in D_f$ be a local unit for $a_f$. Then
\begin{align*}
	0 &= (u-u_f)\left(a_f+\sum_{i=1}^n a_{e_i} \right)=ua_f-u_fa_f+(u-u_f)\left(\sum_{i=1}^n a_{e_i} \right) \\
	&=a_f-a_f + (u-u_f)\left(\sum_{i=1}^n a_{e_i} \right)=\sum_{i=1}^n(u-u_f)a_{e_i}.
\end{align*}
By the induction hypothesis, we conclude that
$\sum_{i=1}^n\overline{(u-u_f)a_{e_i}\delta_{e_i}}=0$.
Using this,
together with Remark~\ref{rem:calc}
and the fact that
$fe_i \leq e_i$ and $fe_i \leq f$, and hence $D_{f e_i} = D_f \cap D_{e_i}$, for each $i \in \{1,\ldots,n\}$,
we get that
\begin{align*}
\sum_{i=1}^n\overline{a_{e_i}\delta_{e_i}} &=\sum_{i=1}^n\overline{u_fa_{e_i}\delta_{e_i}}
	= \sum_{i=1}^n\overline{u_fa_{e_i} \delta_{fe_i}} = \sum_{i=1}^n\overline{u_fa_{e_i}\delta_f} \\
	&= \overline{\left(\sum_{i=1}^n u_fa_{e_i} \right)\delta_{f}}=\overline{\left(u_f\sum_{i=1}^n a_{e_i} \right)\delta_{f}}=\overline{(u_f(-a_f))\delta_{f}}=\overline{-a_f\delta_f}.
\end{align*}
Therefore, 
\begin{displaymath}
	\overline{a\delta_f}+\sum_{i=1}^n\overline{a_{e_i}\delta_{e_i}}=0,
\end{displaymath}
proving that $\phi$ is well-defined.

Clearly, $\phi$ is onto and multiplicative (using Proposition~\ref{prop:partialaction}\eqref{prop:PIidempotent})
and thus a surjective ring morphism.
Now, consider the map $\widetilde{\tau}$ which was defined in Remark~\ref{tau}. Notice that
\begin{displaymath}
	\widetilde\tau\circ\phi\left(\sum_{i=1}^n a_{e_i}\right)=\widetilde\tau\left(\sum_{i=1}^n\overline{a_{e_i}\delta_{e_i}}\right)=\sum_{i=1}^na_{e_i},
\end{displaymath}
that is, $\widetilde\tau\circ\phi=\id_{\mathcal{A}}$, and hence $\phi$ is injective.
\end{proof}

\begin{remark}
Suppose that $S$ is unital, with identity element $1 \in S$. In this case, if $e \in E(S)$, then $e\leq 1$, and therefore for each
$a \in D_e$ we have $\overline{a\delta_e}=\overline{a\delta_1}$. Hence, 
$\overline{\A \delta_1} = \mathcal{D}.$
\end{remark}

It does not make sense to speak of the support-length of an element in the quotient ring $\Skew$.
However, given any element $a\in \Skew$ we may speak of the \emph{minimal support-length
of a representative of $a$}, i.e. an element $x\in \mathcal{L}$ such that $a=\overline{x}$.
We make the following definition.

\begin{definition}
For each non-zero $a\in \Skew$ we define the number
\begin{displaymath}
n(a)= \mini\left\{ |F| \quad \Bigr\lvert \quad a = \sum_{s \in F} \overline{a_s \delta_s}  \ \text{ and } \ a_s \neq 0 \text{ for all } s\in F \right\},	
\end{displaymath}
where $|F|$ denotes the cardinality of the finite set $F.$
\end{definition}

Our goal is to give a characterization of simplicity for skew inverse semigroup rings $\Skew$ in the case when $\A$ is commutative (see Theorem~\ref{thm:main1}).
By Proposition~\ref{prop:DisoA}, the ring $\A$ is isomorphic to $\mathcal{D}$ which is a subring of $\Skew$.
Therefore, we will identify $\A$ with $\mathcal{D}$ and use $\A$ and $\mathcal{D}$ interchangeably.

Recall that the \emph{centralizer} of a non-empty subset $M$ of a ring $R$, denoted by $C_R(M)$, is the set of all the elements of $R$ that commute with each element of $M$. If $C_R(M)=M$ holds, then $M$ is said to be \emph{maximal commutative in $R$}. Notice that a maximal commutative subring is necessarily commutative.

\begin{theorem}\label{teo1}
Let $\A$ be an associative and commutative ring.
Then $\A$ is a maximal commutative subring of $\Skew$ if, and only if,
 $\J \cap \A \neq \{0\}$ for each non-zero ideal $\J$ of $\Skew$. 
\end{theorem}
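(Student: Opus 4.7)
I treat the two implications separately, with most substance in the forward direction.

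For $(\Rightarrow)$, suppose $\A$ is maximal commutative and let $\J$ be a non-zero two-sided ideal. Pick $a \in \J \setminus \{0\}$ with $n(a)$ minimal and write $a = \sum_{i=1}^{n} \overline{a_i\delta_{s_i}}$ with $n = n(a)$, the $s_i$ pairwise distinct, and $a_i \in D_{s_i} \setminus \{0\}$. Minimality forces the $s_i$ to be pairwise incomparable in the partial order on $S$: if $s_i \leq s_j$ for $i \neq j$, Remark~\ref{rem:calc} would let one merge the two terms into one, contradicting minimality. The goal is to show $a \in \mathcal{D} = \A$, which immediately yields $a \in \J \cap \A$.

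For any $c = \overline{b\delta_e} \in \mathcal{D}$ with $e \in E(S)$ and $b \in D_e$, the multiplication rule combined with $e s_i \leq s_i$, $s_i e \leq s_i$, Definition~\ref{def:partialaction}\eqref{defii} (which places the resulting coefficients in $D_{e s_i}$ and $D_{s_i e}$ respectively) and Remark~\ref{rem:calc} (to collapse each summand's support back to $s_i$) yields
\[
ca - ac = \sum_{i=1}^n \overline{\bigl(b a_i - \pi_{s_i}(\pi_{s_i^*}(a_i)b)\bigr) \delta_{s_i}} \in \J .
\]
When $s_i \in E(S)$, the corresponding coefficient vanishes by commutativity of $\A$ together with Proposition~\ref{prop:partialaction}\eqref{prop:PIidempotent}. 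So if some $s_i$ is idempotent, $n(ca-ac) < n(a)$, and minimality forces $ca - ac = 0$ for every such $c$; hence $a$ centralises $\mathcal{D}$, and therefore all of $\A$, so maximal commutativity gives $a \in \A$. If no $s_i$ is idempotent, I replace $a$ by $a' := a \cdot \overline{u \delta_{s_1^*}} \in \J$, where $u \in D_{s_1^*}$ is a local unit for $\pi_{s_1^*}(a_1)$: the $i=1$ summand of $a'$ becomes $\overline{a_1 \delta_{s_1 s_1^*}}$, which is supported at the idempotent $s_1 s_1^* \in E(S)$, and the incomparability of the $s_i$ keeps the new supports $s_i s_1^*$ pairwise distinct (since $s_1 s_1^* = s_k s_1^*$ would force $s_1 \leq s_k$). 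Lemma~\ref{intersection} together with a careful analysis of $\mathcal{N}$ then gives $a' \neq 0$, reducing to the first case.

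For $(\Leftarrow)$, I would argue by contrapositive. Pick $x \in C_\Skew(\A) \setminus \A$ and set $y := x - \widetilde{\tau}(x) \in C_\Skew(\A)$; here $y$ is non-zero because $\widetilde{\tau}$ restricts to the identity on $\mathcal{D} = \A$ (Proposition~\ref{prop:DisoA}), so $y = 0$ would force $x = \widetilde{\tau}(x) \in \A$. The two-sided ideal $\J$ generated by $y$ is non-zero, and using $\widetilde{\tau}(y) = 0$ together with a structural analysis parallel to the forward computation, one shows $\J \cap \A = \{0\}$, contradicting the hypothesis.

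The chief obstacle is the forward reduction step: rigorously verifying $a' \neq 0$ after right-multiplication by $\overline{u \delta_{s_1^*}}$ demands careful exploitation of both the incomparability of the $s_i$ and Lemma~\ref{intersection}, since elements with pairwise incomparable supports are not automatically linearly independent in $\Skew$ (two incomparable supports admitting a common upper bound can make the corresponding singletons coincide modulo $\mathcal{N}$).
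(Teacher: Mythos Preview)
Your overall architecture matches the paper's: minimality of $n(a)$ in the forward direction, and in the backward direction the ideal generated by an element killed by $\widetilde{\tau}$. In fact your element $y=x-\widetilde{\tau}(x)$ is, once $\widetilde{\tau}(x)$ is pushed back into $\Skew$ via $\phi$, exactly the paper's $\sum_s\overline{c_s\delta_{ss^*}}-c$, so the backward sketch is on the right track, though you still owe the computation that $\widetilde{\tau}$ vanishes on every product $\overline{a_u\delta_u}\,y\,\overline{a_v\delta_v}$.

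The real issue is the one you yourself flag in the forward direction: you do not prove $a'\neq 0$. Your appeal to Lemma~\ref{intersection} cannot close this, since $a'$ is not homogeneous, and as you note, incomparable supports in $\Skew$ do not guarantee linear independence. The paper avoids this obstacle entirely by a trick you are missing. Instead of a case split, it \emph{always} left-multiplies the minimal element $x$ by $\overline{\pi_{h^*}(1_h)\delta_{h^*}}$ (for any chosen $h\in F$ and local unit $1_h$ of $x_h$), producing $z$ whose $h^*h$-term has coefficient $\pi_{h^*}(x_h)\neq 0$. The non-vanishing of $z$ is then \emph{forced} by a preliminary step: one first shows $\overline{1_h\delta_{hh^*}}\,x=x$, because the difference $x-\overline{1_h\delta_{hh^*}}\,x$ has at most $|F|-1$ terms and lies in $\J$, so minimality kills it. Since $\overline{1_h\delta_{hh^*}}=\overline{1_h\delta_h}\cdot\overline{\pi_{h^*}(1_h)\delta_{h^*}}$, one gets $x=\overline{1_h\delta_h}\cdot z$, whence $z\neq 0$. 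The commutator argument then applies directly to $z$ (its $h^*h$-term already drops out), with no case analysis needed.

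Your right-multiplication version can be repaired by the mirror image of this: first prove $a\cdot\overline{u\delta_{s_1^*s_1}}=a$ via minimality (the difference has support in $F\setminus\{s_1\}$), then factor $\overline{u\delta_{s_1^*s_1}}=\overline{u\delta_{s_1^*}}\cdot\overline{\pi_{s_1}(u)\delta_{s_1}}$ to conclude $a'=a\cdot\overline{u\delta_{s_1^*}}\neq 0$. Without this step, the argument is incomplete; the incomparability of the $s_is_1^*$ is neither sufficient nor, once the factorization trick is in place, necessary.
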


\begin{proof}
We first show the ''if'' statement.
To this end, suppose that
$\A \cong \mathcal{D}$ is not a maximal commutative subring of $\Skew$. 
We now wish to conclude that there is some non-zero ideal $\J$ of $\Skew$ such that $\J \cap \mathcal{D} = \{0\}$.

Let $c = \sum\limits_{s \in F} \overline{c_s \delta_s} \in (\Skew) \setminus \mathcal{D}$
be an element which commutes with all the elements of $\mathcal{D}$.
Since $c$ commutes with $\overline{a \delta_e}$, for each $e\in E(S)$ and $a\in D_e$, we get that
\begin{displaymath}
	\sum\limits_{s\in F}\overline{ac_s\delta_{es}}=\sum\limits_{s\in F}\overline{\pi_s(\pi_{s^*}(c_s)a)\delta_{se}},
\end{displaymath} 
and hence
\begin{displaymath}
	\sum\limits_{s\in F}ac_s\delta_{es}-\sum\limits_{s\in F}\pi_s(\pi_{s^*}(c_s)a)\delta_{se}\in \mathcal{N}.
\end{displaymath}
Using that $\tau(\mathcal{N})=\{0\}$ we get that 
\begin{equation}\label{eq:coefficientscommuting}
	\sum\limits_{s\in F}\left(ac_s-\pi_s(\pi_{s^*}(c_s)a)\right)=0.
\end{equation}
Notice that
$x:=\sum\limits_{s\in F}\overline{c_s \delta_{ss^*}} - \sum\limits_{s\in F}\overline{c_s \delta_s} \neq \overline{0}.$ 
Otherwise we would have $c=\sum\limits_{s\in F}\overline{c_s \delta_s} =\sum\limits_{s\in F}\overline{c_s \delta_{ss^*}} \in \mathcal{D}.$

Now, let $\J$ be the non-zero ideal of $\Skew$ generated by the element $x.$
Each element of $\J$ is a finite sum of elements of the form
$\overline{a_u\delta_u}x\overline{a_v\delta_v}$, $\overline{a_u\delta_u}x$
and $x\overline{a_v\delta_v}$ for $u,v\in S$ and $a_u \in D_u$, $b_v \in D_v$.
By Proposition~\ref{prop:partialaction}\eqref{prop:PIidempotent}
and the fact that $ss^* \in E(S)$ we notice that
\begin{align*}
	\overline{a_u\delta_u}x\overline{a_v\delta_v} &= \overline{a_u\delta_u}\left(\sum\limits_{s\in F}\overline{c_s\delta_{ss^*}}-\sum\limits_{s\in F}		\overline{c_s\delta_s}\right)\overline{a_v\delta_v} \\
	&= \sum\limits_{s\in F}\overline{\pi_u(\pi_{u^*}(a_u)c_sa_v)\delta_{uss^*v}}-\sum\limits_{s\in F}\overline{\pi_u(\pi_{u^*}(a_u)\pi_s(\pi_{s^*}(c_s)a_v))\delta_{usv}}, 
\end{align*}
and hence, by Equation~\eqref{eq:coefficientscommuting}, we get that
\begin{align*}
	\widetilde{\tau}\left(\overline{a_u\delta_u}x\overline{a_v\delta_v}\right) &=\sum\limits_{s\in F}\pi_u(\pi_{u^*}(a_u)c_sa_v)-\sum\limits_{s\in F}\pi_u(\pi_{u^*}(a_u)\pi_s(\pi_{s^*}(c_s)a_v)) \\
	& =\pi_u\left(\pi_{u^*}(a_u)\sum\limits_{s\in F}(c_sa_v-\pi_s(\pi_{s^*}(c_s)a_v))\right)=0.
\end{align*}
Analogously, one may show that $\widetilde{\tau}(\overline{a_u\delta_u}x)=0$ and $\widetilde{\tau}(x\overline{a_v\delta_v})=0$.
This shows that $\widetilde{\tau}(\J)=\{0\}$.
Take any $y \in \J \cap \mathcal{D}$.
Then $y = \sum_{i=1}^n\overline{a_{i} \delta_{e_i}}$, for some $n\in \mathbb{Z}_+$ and $e_i\in E(S)$, $a_i\in D_{e_i}$
for $i\in \{1,\ldots,n\}$.
Notice that 
\begin{displaymath}
	\sum_{i=1}^na_i = \tau\left(\sum_{i=1}^n a_i\delta_{e_i} \right) = \widetilde{\tau}(y)= 0.
\end{displaymath}
Hence $y=0$ (for the same reason that $\phi$ is well-defined in Proposition~\ref{prop:DisoA}).
We now conclude that $\J\cap \mathcal{D}=\{0\}$.

Now we show the ''only if'' statement.
Suppose that $\mathcal{D} \cong \A$ is a maximal commutative subring of $\Skew$.
Let $\J$ be a non-zero ideal of $\Skew$.
Take $x \in \J \setminus \{0\}$ such that $n(x)=\mini \{ n(y) \ \mid \ y \in \J \setminus \{0\}  \}$
and write $x= \sum\limits_{s \in F}\overline{x_s \delta_s}$, where $|F|=n(x).$ 
Choose some $h\in F,$ and let $1_h \in D_h$ be a local unit for $x_h$.
By Proposition~\ref{prop:partialaction}\eqref{prop:sss}, $1_h\in D_{hh^*}$.
Notice that
\begin{displaymath}
	\overline{1_h\delta_{hh^*}}x=\overline{x_h\delta_h}+\sum\limits_{s\in F\setminus\{h\}}\overline{1_hx_s\delta_{hh^*s}}.
\end{displaymath}
Using that $hh^*s\leq s$, for each $s \in S$, we get that $\overline{1_hx_s\delta_{hh^*s}}=\overline{1_hx_s\delta_s}$ and hence
\begin{displaymath}
	\overline{1_h\delta_{hh^*}}x=\overline{x_h\delta_h}+\sum\limits_{s\in F\setminus\{h\}}\overline{1_hx_s\delta_s}.
\end{displaymath}
Let
$y=x-\overline{1_h\delta_{hh^*}}x=\sum\limits_{s\in F\setminus\{h\}}\overline{(1_hx_s-x_s)\delta_s}$
and notice that $y \in \J$.
Using that $n(x)$ is minimal and $y\in \J$ we conclude that $y=0$.
Thus, we have that $\sum\limits_{s\in F\setminus\{h\}}\overline{1_hx_s\delta_s}=\sum\limits_{s\in F\setminus\{h\}}\overline{x_s\delta_s}$
and hence
\begin{displaymath}
	x=\overline{x_h\delta_h}+\sum\limits_{s\in F\setminus\{h\}}\overline{1_hx_s\delta_s}.
\end{displaymath}
In particular, $\overline{1_h\delta_{hh^*}} \ x=x\neq 0$ and, since $\overline{1_h\delta_{hh^*}} \ x=\overline{1_h\delta_h} \  \overline{\pi_{h^*}(1_h)\delta_{h^*}} \  x$, we have that
$\overline{\pi_{h^*}(1_h)\delta_{h^*}}\ x \neq 0$.
Let $z=\overline{\pi_{h^*}(1_h)\delta_{h^*}} \ x \in \J$ and notice that $z$ is non-zero and that
\begin{align*}
z = \overline{\pi_{h^*}(1_h)\delta_{h^*}}x & = \overline{\pi_{h^*}(x_h)\delta_{h^*h}}+\sum\limits_{s\in F\setminus\{h\}}\overline{\pi_{h^*}(1_h)\delta_{h^*}{x_s}\delta_s}	\\
  & = \overline{\pi_{h^*}(x_h)\delta_{h^*h}}+\sum\limits_{s\in F\setminus\{h\}}\overline{\pi_{h^*}(1_hx_s)\delta_{h^*s}}.
\end{align*}

Now, let $\overline{a \delta_e} \in \mathcal{D}$ be arbitrary and consider the element $p=\overline{a \delta_e}\cdot z - z\cdot \overline{a \delta_e} \in \J.$ 
We have that 
\begin{displaymath}
p=\overline{a\pi_{h^*}(x_h)\delta_{ehh^*}}+\sum\limits_{s\in F\setminus\{h\}}\overline{a\pi_{h^*}(1_hx_s)\delta_{eh^*s}}-\overline{\pi_{h^*}(x_h)a\delta_{hh^*e}}-\sum\limits_{s\in F\setminus\{h\}}\overline{\pi_{h^*s}(\pi_{s^*h}(\pi_{h^*}(1_hx_s))a)\delta_{h^*se}}.
\end{displaymath}
Since $\A$ and $E(S)$ are commutative, we have that
\begin{displaymath}
p  = \sum\limits_{s\in F\setminus\{h\}}\overline{a\pi_{h^*}(1_hx_s)\delta_{eh^*s}}-\sum\limits_{s\in F\setminus\{h\}}\overline{\pi_{h^*s}(\pi_{s^*h}(\pi_{h^*}(1_hx_s))a)\delta_{h^*se}}.
\end{displaymath} 
Using that $eh^*s \leq h^*s$ and $h^*se \leq h^*s,$ we have that
\begin{displaymath}
p = \sum\limits_{s\in F\setminus\{h\}}\overline{a\pi_{h^*}(1_hx_s)\delta_{h^*s}} -\sum\limits_{s\in F\setminus\{h\}}\overline{\pi_{h^*s}(\pi_{s^*h}(\pi_{h^*}(1_hx_s))a)\delta_{h^*s}}.
\end{displaymath}
Hence, $n(p) < n(x)$ and by the minimality of $n(x)$ we conclude that $p=0.$

But this implies that $\overline{a \delta_e} \cdot z = z \cdot \overline{a \delta_e}$. Therefore
\begin{displaymath}
	\sum_{i=1}^n\overline{ a_i \delta_{e_i}} \cdot z = z \cdot \sum_{i=1}^n\overline{ a_i \delta_{e_i}},
\end{displaymath}
for all $\sum_{i=1}^n \overline{a_i \delta_{e_i}} \in \mathcal{D}$. Since $\mathcal{D} \cong \A$ is maximal commutative, we get that $z\in \mathcal{D}$.
We conclude that $\J\cap \mathcal{D} \neq \{0\}$.
\end{proof}

\begin{corollary}\label{anelmaximal}
Let $\A$ be an associative and commutative ring. If $\Skew$ is simple, then $\A$ is a maximal commutative subring of $\Skew$.
\end{corollary}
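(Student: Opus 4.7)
The plan is to derive this as an essentially immediate consequence of Theorem~\ref{teo1}. That theorem gives a two-sided characterization: $\A$ (identified with $\mathcal{D}$) is maximal commutative in $\Skew$ if and only if $\J \cap \A \neq \{0\}$ for every non-zero ideal $\J$ of $\Skew$. So the strategy is simply to verify that the right-hand condition holds vacuously under the simplicity hypothesis.

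More precisely, assume $\Skew$ is simple. By convention, this means $\Skew$ is a non-zero ring whose only two-sided ideals are $\{0\}$ and $\Skew$ itself. Hence there is a unique non-zero ideal to check, namely $\J = \Skew$, and for this ideal one has $\J \cap \A = \A$. Thus the condition reduces to verifying that $\A \neq \{0\}$, which follows because if $\A$ were the zero ring then each $D_s$ would be zero and $\Skew$ would collapse to the zero ring, contradicting simplicity. Therefore every non-zero ideal of $\Skew$ intersects $\A$ non-trivially, and Theorem~\ref{teo1} yields that $\A$ is a maximal commutative subring of $\Skew$.

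There is essentially no obstacle here; the entire content is in Theorem~\ref{teo1} and in the embedding $\A \cong \mathcal{D} \hookrightarrow \Skew$ supplied by Proposition~\ref{prop:DisoA}. The only point that requires a brief remark (rather than calculation) is the non-triviality of $\A$, which is handled as above.
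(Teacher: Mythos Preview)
Your proof is correct and matches the paper's approach: the corollary is stated immediately after Theorem~\ref{teo1} with no separate proof, so it is meant to follow exactly as you describe, by observing that under simplicity the only non-zero ideal is $\Skew$ itself and hence the ideal-intersection condition of Theorem~\ref{teo1} holds trivially.
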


Recall that an ideal $I$ of $\A$ is \emph{$S$-invariant} if $\pi_s(I\cap D_{s^*})\subseteq I$ holds for each $s\in S$. The ring $\A$ is said to be \emph{$S$-simple} if $\A$ has no non-zero $S$-invariant proper ideal.

\begin{prop}\label{anelssimple}
Let $\A$ be an associative ring. If $\Skew$ is simple, then $\A$ is $S$-simple.
\end{prop}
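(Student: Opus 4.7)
The plan is to argue by contrapositive: assuming that $\A$ admits a non-zero proper $S$-invariant ideal $I$, I would construct a non-zero proper two-sided ideal of $\Skew$, contradicting simplicity.

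The natural candidate is the additive subgroup
\begin{displaymath}
\J = \left\{ \sum_{s \in F}^{\text{finite}} \overline{b_s\delta_s} \ \Bigr\lvert \ F \subseteq S \text{ finite, } b_s \in I \cap D_s \right\} \subseteq \Skew.
\end{displaymath}
The first task is to check that $\J$ is in fact a two-sided ideal. This will reduce to the computation
\begin{displaymath}
\overline{a\delta_t}\cdot\overline{b\delta_s} = \overline{\pi_t(\pi_{t^*}(a)b)\delta_{ts}}
\end{displaymath}
for $b \in I \cap D_s$ and arbitrary $a \in D_t$: since $\pi_{t^*}(a)b \in I \cap D_{t^*} \cap D_s$ (using that $I$ is an ideal of $\A$) and $I$ is $S$-invariant, the element $\pi_t(\pi_{t^*}(a)b)$ lies in $I \cap D_{ts}$, so the product lies in $\J$. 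The symmetric calculation with $\overline{b\delta_s}\cdot\overline{c\delta_u}$ uses the $S$-invariance of $I$ with respect to $\pi_{s^*}$ (available since $I$ is stable under every $\pi_s$ because $S$ is closed under $*$).

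Next I would verify that $\J$ is non-zero. Pick any $0 \neq a \in I$ and a local unit $u \in \A$ for $\{a\}$. Writing $u = \sum_{j} u_{f_j}$ with $u_{f_j} \in D_{f_j}$ and $f_j \in E(S)$, we obtain $a = \sum_j u_{f_j}a$ with each coefficient $u_{f_j}a$ lying in $I \cap D_{f_j}$. Hence $\phi(a) = \sum_j \overline{u_{f_j}a \, \delta_{f_j}} \in \J$, and this element is non-zero because the embedding $\phi\colon \A \hookrightarrow \Skew$ of Proposition~\ref{prop:DisoA} is injective.

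The step I expect to be the crux is proving that $\J$ is a \emph{proper} ideal. For this I would exploit the map $\widetilde{\tau}$ from Remark~\ref{tau}. By construction every generator of $\J$ is of the form $\overline{b\delta_s}$ with $b \in I$, and the computations in the first paragraph show that $\J$ is already equal to the additive span of such generators (no extra products are needed). Therefore $\widetilde{\tau}(\J) \subseteq I$. If $\J$ were all of $\Skew$, then in particular $\mathcal{D} \subseteq \J$; but $\widetilde{\tau}|_{\mathcal{D}}$ is the inverse of the isomorphism $\phi\colon \A \to \mathcal{D}$ (as observed at the end of the proof of Proposition~\ref{prop:DisoA}), so we would conclude $\A = \widetilde{\tau}(\mathcal{D}) \subseteq \widetilde{\tau}(\J) \subseteq I$, contradicting the assumption that $I$ is proper. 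This contradicts the simplicity of $\Skew$ and finishes the proof.
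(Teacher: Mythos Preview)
Your proof is correct and follows essentially the same strategy as the paper: both define the same candidate ideal $\J=\mathcal{H}=\{\sum \overline{b_s\delta_s}\mid b_s\in I\cap D_s\}$, verify it is a two-sided ideal via $S$-invariance, show it is non-zero using a local unit decomposition and the injectivity of $\phi$, and use $\widetilde\tau(\J)\subseteq I$ to conclude (you frame this as properness of $\J$, the paper frames it as $I=\A$, but the content is identical). One minor caveat shared with the paper: Proposition~\ref{prop:DisoA}, which you invoke for the embedding $\phi$, is stated under the hypothesis that $\A$ is commutative, whereas the present proposition does not assume this; the injectivity argument $\widetilde\tau\circ\phi=\id_\A$ goes through without commutativity, so there is no genuine gap, but strictly speaking you are using that proof rather than the proposition as stated.
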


\begin{proof}
Let $I$ be a non-zero $S$-invariant ideal of $\A$. Define the set
\begin{displaymath}
	\mathcal{H}=\left\lbrace\sum_{s \in S} \overline{a_s\delta_s} \in \Skew \,\, \Big| \,\, a_s \in I \cap D_s, \, s \in S \right\rbrace.
\end{displaymath}

Notice that $\mathcal{H} \neq \{0\}$. Indeed, let $a \in I$ be non-zero and let $u \in \A$ be a local unit for $a.$ By Definition~\ref{def:partialaction}\eqref{defi} there are idempotents $e_1, \ldots, e_n \in E(S)$ such that $u=\sum_{i=1}^n u_i$, with $u_i \in D_{e_i}$ for $i\in \{1,\ldots,n\}.$
Clearly,
\begin{displaymath}
	0 \neq a = ua= \sum_{i=1}^nu_ia.
\end{displaymath}
Using that $I$ is an ideal of $\A$, we get that $u_ia \in I\cap D_{e_i}$ for $i\in \{1,\ldots,n\}$, and hence $\sum_{i=1}^n\overline{u_ia\delta_{e_i}} \in \mathcal{H}.$
Let $\phi$ denote the ring isomorphism from the proof of 
Proposition~\ref{prop:DisoA}. Using that $a \neq 0$, we get that
$\sum_{i=1}^n\overline{u_ia\delta_{e_i}}=\phi(\sum_{i=1}^n u_ia) = \phi(a) \neq 0$.

Moreover, $\mathcal{H}$ is a left ideal of $\Skew$. Indeed, if $\overline{a_r\delta_r} \in \Skew$ and $a_s \in I \cap D_s$ then $(\overline{a_r\delta_r})( \overline{ a_s \delta_s}) = \overline{ \pi_r(\pi_{r^*}(a_r)a_s)\delta_{rs}}.$  Since $I$ is $S$-invariant, $\pi_r(\pi_{r^*}(a_r)a_s) \in I $, and from the definition of a partial action we get that $\pi_r(\pi_{r^*}(a_r)a_s) \in D_{rs}$. Hence, $\overline{a_r\delta_r a_s \delta_s} \in \mathcal{H}.$

Similarly, $\mathcal{H}$ is a right ideal of $\Skew$ and hence, by the simplicity of $\Skew$, we obtain that $\mathcal{H}=\Skew.$ From the definition of $\mathcal{H}$ we immediately see that $\widetilde\tau(\mathcal{H}) \subseteq I$, and from what was done above, $\widetilde\tau(\mathcal{H})=\widetilde\tau(\Skew)=\A.$ Thus, $I=\A$ and therefore $\A$ is $S$-simple.
\end{proof}

We are now ready to state and prove the main result of this article.

\begin{theorem}\label{thm:main1}
If $\A$ is an associative and commutative ring, then the following two assertions are equivalent:
\begin{enumerate}[{\rm (i)}]
	\item\label{mainthm:1} The skew inverse semigroup ring $\Skew$ is simple;
	\item\label{mainthm:2} $\A$ is $S$-simple, and $\A\cong \mathcal{D}$ is a maximal commutative subring of $\Skew$.
\end{enumerate}
\end{theorem}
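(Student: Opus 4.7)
The plan is to split into the two implications. The forward direction (i)$\Rightarrow$(ii) is essentially already done: it follows immediately by combining Corollary~\ref{anelmaximal} (maximal commutativity of $\A$) with Proposition~\ref{anelssimple} ($S$-simplicity of $\A$).

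For the reverse direction (ii)$\Rightarrow$(i), I assume $\A$ is $S$-simple and $\A \cong \mathcal{D}$ is maximal commutative in $\Skew$, pick an arbitrary non-zero ideal $\J$ of $\Skew$, and aim to prove $\J = \Skew$. My first move is to apply Theorem~\ref{teo1} to obtain $\J \cap \mathcal{D} \neq \{0\}$, and then, via the isomorphism $\phi$ from Proposition~\ref{prop:DisoA}, to set $I := \phi^{-1}(\J \cap \mathcal{D})$, which is a non-zero ideal of $\A$. The strategy is now to show that $I$ is $S$-invariant (so that $S$-simplicity forces $I=\A$, hence $\mathcal{D} \subseteq \J$), and then to bootstrap from $\mathcal{D} \subseteq \J$ to $\J = \Skew$.

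The main technical step, and the place where the real work is required, is verifying $S$-invariance of $I$: given $a \in I \cap D_{s^*}$, one needs $\pi_s(a) \in I$. The idea is to observe that $\overline{a \delta_{s^*s}} \in \mathcal{D} \cap \J$ (using $D_{s^*} \subseteq D_{s^*s}$ from Proposition~\ref{prop:partialaction}\eqref{prop:sss}) and then to ``conjugate'' this element by $\overline{u \delta_s}$ and $\overline{\pi_{s^*}(u) \delta_{s^*}}$, where $u \in D_s$ is chosen as a local unit for $\pi_s(a)$. A direct computation using the product rule, the identity $\pi_{s^*}\circ\pi_s = \id_{D_{s^*}}$, the relation $ss^*s = s$, and Proposition~\ref{prop:partialaction}\eqref{prop:PIidempotent} should collapse this triple product to $\overline{\pi_s(a) \delta_{ss^*}} \in \mathcal{D} \cap \J$, yielding $\pi_s(a) \in I$ after applying $\phi^{-1}$.

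To close the argument, I need $\mathcal{D} \subseteq \J$ to imply $\J = \Skew$. It suffices to show that every homogeneous element $\overline{a \delta_s}$, with $a \in D_s$, lies in $\J$. Since $a \in D_{ss^*}$, the element $\overline{a \delta_{ss^*}}$ belongs to $\mathcal{D} \subseteq \J$, and multiplying on the right by $\overline{v \delta_s}$ for a local unit $v \in D_s$ of $a$ produces $\overline{a \delta_s}$ after a short computation using $\pi_{ss^*} = \id_{D_{ss^*}}$ and $ss^*s = s$. Because every element of $\Skew$ is a finite sum of such homogeneous terms, this finishes the proof.
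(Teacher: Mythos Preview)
Your proposal is correct and follows essentially the same route as the paper: both directions invoke the same earlier results (Corollary~\ref{anelmaximal}, Proposition~\ref{anelssimple}, Theorem~\ref{teo1}, Proposition~\ref{prop:DisoA}), and for (ii)$\Rightarrow$(i) both arguments establish $S$-invariance of $\phi^{-1}(\J\cap\mathcal{D})$ by conjugating an element of $\J\cap\mathcal{D}$ by $\overline{u\delta_s}$ and $\overline{\pi_{s^*}(u)\delta_{s^*}}$, then absorb arbitrary homogeneous elements using a local unit. Your version is slightly more streamlined in that you use $\phi(a)=\overline{a\delta_{s^*s}}$ directly (valid by well-definedness of $\phi$ since $a\in D_{s^*}\subseteq D_{s^*s}$), whereas the paper carries along an explicit decomposition $a=\sum_i a_{e_i}$; the computations are otherwise the same.
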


\begin{proof}
\eqref{mainthm:1}$\Rightarrow$\eqref{mainthm:2}: This follows from Corollary~\ref{anelmaximal} and Proposition~\ref{anelssimple}.

\eqref{mainthm:2}$\Rightarrow$\eqref{mainthm:1}: Let $\J$ be a non-zero ideal of $\Skew.$ By Theorem \ref{teo1}, $\J \cap \mathcal{D} \neq \{0\}.$

Put $\mathcal{K}=\J \cap \mathcal{D}$ and $\mathcal{K}'=\phi^{-1}(\mathcal{K})$,
where $\phi: \A  \rightarrow \mathcal{D}$ is the ring isomorphism from Proposition~\ref{prop:DisoA}.
Clearly, $\mathcal{K}'$ is a non-zero ideal of $\A$.
Now we show that $\mathcal{K}'$ is $S$-invariant.

Take an arbitrary $s\in S$ and an arbitrary $a_s \in \mathcal{K}' \cap D_s$. Pick a local unit $1_s$ for $a_s$ in $D_s.$ 
By the definition of $\A$ there are idempotents $e_1, \ldots, e_n \in S$, and elements $a_{e_i} \in D_{e_i}$, for $i\in \{1,\ldots,n\}$, such that $a_s=\sum_{i=1}^n a_{e_i}$ and $\phi(a_s)=\sum_{i=1}^n\overline{a_{e_i} \delta_{e_i}} \in \mathcal{K}$. We notice that
\begin{align*}
\overline{\pi_{s^*}(1_s)\delta_{s^*}} \cdot \sum_{i=1}^n\overline{a_{e_i}\delta_{e_i}} \cdot \overline{1_s\delta_s}
                 & = \overline{\pi_{s^*}(1_s)\delta_{s^*}\cdot \sum_{i=1}^n a_{e_i} \delta_{e_i} \cdot 1_s\delta_s}
                  = \overline{\sum_{i=1}^n\pi_{s^*}(1_s a_{e_i})\delta_{s^*e_i}\cdot 1_s\delta_s} \\
                 & \stackrel{s^*e_i \leq s^*}{=} \overline{\sum_{i=1}^n\pi_{s^*}(1_s a_{e_i})\delta_{s^*}\cdot 1_s\delta_s} 
                  = \overline{\sum_{i=1}^n\pi_{s^*}(1_s a_{e_i} 1_s)\delta_{s^*s}} \\
                 & = \overline{ \pi_{s^*} \left( 1_s \left( \sum_{i=1}^n a_{e_i} \right) 1_s \right) \delta_{s^*s}} 
                  = \overline{\pi_{s^*}(a_s)\delta_{s^*s}}
\end{align*}
is in $\J \cap \A = \mathcal{K}$ and hence $\pi_{s^*}(a_s) = \phi^{-1}(\overline{\pi_{s^*}(a_s)\delta_{s^*s}}) \in \mathcal{K}'$. Therefore $\mathcal{K}'$ is $S$-invariant.
Using that $\A$ is $S$-simple we conclude that $\mathcal{K}'=\A.$  

Now, consider the arbitrary element $\overline{a_s\delta_s} \in \Skew.$
By letting $1_s$ be a local unit for $a_s$ in $D_s,$ we have that $1_s \in \A =\mathcal{K}'$. Hence there are idempotents $f_1, \ldots, f_m \in E(S)$ and $u_j \in D_{f_j}$, for $j\in \{1,\ldots,m\}$, such that $1_s=\sum_{j=1}^m u_j \in \mathcal{K}'$ and  $\phi(1_s)=\sum_{j=1}^m\overline{u_j\delta_{f_j}} \in \mathcal{K} \subseteq \J.$ Thus,
\begin{align*}
\overline{a_s\delta_s}
&=\overline{1_sa_s\delta_s}
=\overline{ \left( \sum_{j=1}^m u_j \right) a_s\delta_{s}}
=\overline{\sum_{j=1}^mu_ja_s\delta_{s}}
\ \stackrel{f_js \leq s}{=} \
 \overline{\sum_{j=1}^mu_ja_s\delta_{f_js}} \\
&=\overline{ \left( \sum_{j=1}^mu_j\delta_{f_j} \right) (a_s\delta_s)}  
=\left( \sum_{j=1}^m\overline{u_j\delta_{f_j}} \right) \overline{a_s\delta_s} \ \in \J.
\end{align*}
This shows that $\Skew =\J$ as desired.
\end{proof}

\section{An application to topological dynamics}\label{topd}

In this section we will apply our main results and connect topological properties of a partial action of an inverse semigroup $S$ on a topological space $X$ with algebraic properties of the associated skew inverse semigroup ring $\Lc\rtimes_\alpha S$.

\begin{definition}\label{def:partialactiontopological}
A \emph{topological partial action} of an inverse semigroup $S$ on a locally compact Hausdorff space $X$ is a collection of open sets  $\{X_s\}_{s\in S}$ of $X$ and homeomorphisms  $\{\theta_s : X_{s^*} \to X_s \}_{s\in S}$ such that, for any $s,t\in S$, the following three assertions hold:
\begin{enumerate}[{\rm (i)}]
	\item $X$ is the  union $\bigcup_{e \in E(S)} X_e;$
	\item $\theta_s(X_{s^*} \cap X_t ) = X_s \cap X_{st}$;
	\item $\theta_{s}(\theta_{t}(x))=\theta_{st}(x)$, for all $x\in X_{t^*} \cap X_{t^*s^*}$.
\end{enumerate}
\end{definition}

Let $X$ be a
zero-dimensional locally compact Hausdorff space (this means that $X$ has a basis formed by compact-open subsets) and let $R$ be a unital commutative ring.  We denote by $\Lc$ the commutative $R$-algebra  formed by  all locally constant, compactly supported, $R$-valued functions on $X$ (with pointwise addition and multiplication). If $D$ is a compact-open subset of $X$, then the characteristic function of $D$, $1_D$, is an element of $\Lc$. Moreover, every $f \in \Lc$ may be written as
\begin{displaymath}
	f=\sum_{i=1}^n c_i1_{D_i},
\end{displaymath}
where the $D_i$'s are compact-open, pairwise disjoint subsets of $X$. For $f \in \Lc$, we define the \emph{support of $f$} by 
\begin{displaymath}
	\supp(f)= \{x \in X \ \mid \ f(x)\neq 0\},
\end{displaymath}
and notice that it is a clopen set. Notice that an $R$-valued function is locally constant if, and only if, it is continuous
once we equip $R$ with the discrete topology. 

For a subset $T \subseteq X,$ we define $I(T)=\{f \in \Lc \ \mid \ f(x) = 0, \ \forall x \in T\}.$ 
Clearly, the set $I(T)$ is an ideal of $\Lc.$ Moreover, since every function in $\Lc$ is continuous, we conclude that $I(T)=I{\left(\overline{T}\right)},$ where $\overline{T}$ denotes the closure of $T.$ 

\begin{lemma}\label{lemmaideal}
Let $R$ be a field. Then every ideal $J$ of $\Lc$ is of the form
\begin{displaymath}
	I(F):= \{ f \in \Lc \, \mid \, f(x) = 0, \, \forall x \in F \},
\end{displaymath}
where $F$ is a closed subset of $X$ given by 
\begin{displaymath}
	F=\{x \in X \, \mid \, f(x)=0, \ \forall f \in J \}.
\end{displaymath}
\end{lemma}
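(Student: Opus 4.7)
The plan is to prove both inclusions $J \subseteq I(F)$ and $I(F) \subseteq J$ separately, after first noting that $F$ is indeed closed. The closedness of $F$ comes for free by writing $F = \bigcap_{f \in J} f^{-1}(0) = \bigcap_{f \in J} (X \setminus \supp(f))$, which exhibits $F$ as an intersection of closed sets. The inclusion $J \subseteq I(F)$ is then immediate from the definition of $F$: every $f \in J$ vanishes on every point of $F$.

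For the reverse inclusion $I(F) \subseteq J$, I would take an arbitrary $g \in I(F)$ and use the standard representation $g = \sum_{i=1}^{n} c_i \, 1_{D_i}$, where the $D_i$ are pairwise disjoint compact-open sets and the coefficients $c_i \in R$ are nonzero. It will be enough to show that $1_{D_i} \in J$ for each $i$, because $g$ is then an $R$-linear combination of elements of $J$.

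To prove $1_{D_i} \in J$, I would fix $i$ and an arbitrary $x \in D_i$. Since $g(x) = c_i \neq 0$ and $g$ vanishes on $F$, the point $x$ is not in $F$, so some $f_x \in J$ satisfies $f_x(x) \neq 0$. Using that $f_x$ is locally constant and that the compact-open subsets form a basis of $X$, I can choose a compact-open neighborhood $U_x$ of $x$ (which, upon intersection with $D_i$, may be assumed contained in $D_i$) on which $f_x$ takes a constant nonzero value $c \in R$. Here the field hypothesis plays its decisive role: $c$ is invertible, so $1_{U_x} = c^{-1}(1_{U_x} \cdot f_x)$ lies in $J$, because $1_{U_x} \in \Lc$ and $J$ is an ideal. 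By compactness of $D_i$, finitely many such neighborhoods $U_{x_1}, \ldots, U_{x_k}$ cover $D_i$, and a routine inclusion–exclusion based on $1_{A \cup B} = 1_A + 1_B - 1_A 1_B$ then shows that $1_W \in J$ for $W := U_{x_1} \cup \cdots \cup U_{x_k}$. Since $D_i \subseteq W$, I conclude $1_{D_i} = 1_{D_i} \cdot 1_W \in J$.

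I expect the main obstacle to be the step producing $1_{U_x} \in J$ from the bare existence of $f_x \in J$ with $f_x(x) \neq 0$. This is where all three hypotheses conspire: zero-dimensionality of $X$ provides a compact-open neighborhood basis, local constancy of functions in $\Lc$ promotes ``nonzero at $x$'' to ``constant and nonzero on a neighborhood'', and the field hypothesis on $R$ allows inverting that constant value. The remaining manipulations with indicator functions amount to Boolean algebra in the idempotent subring generated by $\{1_D : D \text{ compact-open}\}$ and should be routine.
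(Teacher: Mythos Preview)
Your proof is correct. Both your argument and the paper's use the same core ideas: closedness of $F$ via continuity, the trivial inclusion $J\subseteq I(F)$, then for the reverse inclusion the observation that each point of $\supp(f)$ lies outside $F$, hence is witnessed by some $f_x\in J$, followed by a compactness reduction to finitely many such $f_{x_i}$ and an appeal to invertibility in the field $R$.

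The execution differs. You first decompose $g=\sum c_i 1_{D_i}$ and reduce to showing each $1_{D_i}\in J$; you then produce small indicators $1_{U_x}\in J$ from the $f_x$ and assemble them via inclusion--exclusion. The paper instead works globally on $U=\supp(f)$: it disjointifies the supports of the $f_{x_i}$ to build a single function $g=\sum f_{x_i}1_{U_i}\in J$ that is nonzero on all of $U$, defines its pointwise inverse $h$ on $U$ (extended by zero), and concludes $f=f\cdot g\cdot h\in J$ in one stroke. The paper's route avoids the inclusion--exclusion bookkeeping and yields the slightly sharper byproduct that $g\cdot h$ is a local unit for $f$; your route has the advantage of isolating the elementary fact that every indicator $1_D$ with $D$ compact-open and $D\cap F=\varnothing$ already lies in $J$. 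One small remark: when you write $1_{U_x}=c^{-1}(1_{U_x}\cdot f_x)$, phrase it as $1_{U_x}=(c^{-1}1_{U_x})\cdot f_x$ so that membership in $J$ follows from $J$ being a ring ideal without assuming closure under $R$-scalars.
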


\begin{proof}
Let $J$ be an ideal of $\Lc.$ Using that every function $f \in \Lc$ is continuous, 
we have that the subset $F=\{x \in X \, \mid \, f(x) =0, \ \forall f \in J \}$
is closed in $X$. Clearly,  $J \subseteq I(F). $

Now, take any $f \in I(F).$ Consider the set $U=\supp (f).$ Notice that $U \cap F = \varnothing .$
If $x \in U$, then $x \notin F$ and there exists some $f_x \in J$ such that $f_x(x) \neq 0$. 
We have that
\begin{displaymath}
	U \subseteq \bigcup_{x \in U} \{y \in X \ \mid \ f_x(y)\neq 0 \}= \bigcup_{x \in U} \supp(f_x).
\end{displaymath}
By compactness of $U$ we may find finitely many points $x_1, \ldots, x_n$ such that
\begin{displaymath}
	U \subseteq \bigcup_{i=1}^n \{y \in X \ \mid \ f_{x_i}(y)\neq 0 \}=\bigcup_{i=1}^n \supp(f_{x_i}).
\end{displaymath}

Consider $U_1:=\supp(f_{x_1})$ and $U_i:= \supp(f_{x_i}) \setminus \bigcup_{k=1}^{i-1} \supp(f_{x_k})$ 
for all $i \in \{2,\ldots,n\}.$ We have that 
\begin{displaymath}
	\bigcup_{i=1}^n \supp(f_{x_i})=\bigcup_{i=1}^n U_i,
\end{displaymath}
where the last union is a disjoint union of compact-open subsets. 

Let $g:=\sum_{i=1}^n f_{x_i}\cdot 1_{U_i}.$ 
Using that $f_{x_i} \in J$, for each $i \in \{1,\ldots,n\}$, we have that $g \in J.$ 
Notice that  $g(x) \neq 0$ for all $x \in U.$ 
We define
\begin{displaymath}
	h(x):= \left\lbrace 
\begin{array}{ccl}
\frac{1}{g(x)} & & \mbox{if} \ x \in U \\
0              & & \mbox{if} \ x \notin U
\end{array}
\right.
\end{displaymath}
and notice that $h \in \Lc$. 
Clearly, $f=f\cdot g \cdot h \in J.$
Actually, $g \cdot h$ is a local unit for $f$.
\end{proof}

\begin{remark}\label{idealu}
Let $R$ be a field. Notice that, by Lemma~\ref{lemmaideal}, every ideal $J$ of $\Lc$ is of the form
\begin{displaymath}
	I(U)=\left\lbrace f \in \Lc \, \mid \, f(x)=0, \, \forall x \in X \setminus U \right\rbrace = \left\lbrace f \in \Lc \, \mid \, \supp(f)\subseteq U \right\rbrace,
\end{displaymath}
where $U$ is an open subset of $X$ defined as 
\begin{displaymath}
	U=\left\lbrace x \in X \, \mid \, \exists f \in J \, \mbox{ such that } \, f(x) \neq 0 \right\rbrace=\bigcup_{f \in J} \supp(f).
\end{displaymath}
\end{remark}

From the partial action $\theta=(\{\theta_s \}_{s\in S}, \{X_s\}_{s\in S})$ of an inverse semigroup $S$ on a locally compact, Hausdorff, zero-dimensional space $X,$ we get a corresponding partial action $\alpha=(\{\alpha_s\}_{s\in S}, \{D_s\}_{s\in S})$ of the semigroup $S$ on the $R$-algebra $\Lc$ of all locally constant, compactly supported, $R$-valued functions on $X,$ where $R$ is a unital and commutative ring. More precisely, for each $s \in S,$ we have that $\alpha_s$ is an isomorphism from
\begin{displaymath}
	D_{s^*}=\{f \in \Lc  \ \mid \ \supp(f) \subseteq X_{s^*}\} \simeq \mathcal{L}_c(X_{s^*})
\end{displaymath}
onto 
\begin{displaymath}
	D_s=\{f \in \Lc  \ \mid \ \supp(f) \subseteq X_s\} \simeq \mathcal{L}_c(X_s)
\end{displaymath}
which is defined by
\begin{displaymath}
	\alpha_s(f)(x)= \left\lbrace \begin{array}{ccr}
    f \circ \theta_{s^*}(x) & & \mbox{if} \,\, x \in X_s \\
    0                       & & \mbox{if} \,\, x \notin X_s 
    \end{array}\right..
\end{displaymath}

\begin{remark}\label{pinondegenerate}
It is routine to check that $\alpha$ is a partial action. We wish to convince the reader that $\alpha$ is non-degenerate. Let $f$ be an element of $\Lc$.  By non-degeneracy of $\theta$, for any $x \in \supp(f)$ there is  a compact-open neighbourhood $D$ of  $x$ contained in $X_e$, for some $e \in E(S)$, and such that $f\lvert_D$ is constant. By compactness of $\supp(f)$ we can find finitely many compact-open subsets $D_1, \ldots, D_n$ such that $D_i \subseteq X_{e_i}$, $e_i \in E(S)$, and $\supp(f) \subseteq \bigcup_{i=1}^n D_i$.
By putting $K_1=D_1\cap \supp(f)$ and $K_j=\left(D_j\setminus \bigcup_{i=1}^{j-1}(D_i)\right)\cap \supp(f)$, for all $j \in \{2, \ldots, n\}$, we get that $\supp(f)$ is equal to the disjoint union of the compact-open subsets $K_1,\ldots,K_n$, and that
\[f=\sum_{i=1}^n f 1_{K_i}= \sum_{i=1}^n r_i 1_{K_i} \in \Span_R \left(\bigcup_{e\in E(S)} D_e\right).\]
Notice that the $K_i$'s are pairwise disjoint subsets.
\end{remark}

\begin{definition}
Let $\theta=(\{\theta_s \}_{s\in S}, \{X_s\}_{s\in S})$ be a topological partial action of an inverse semigroup $S$ on a locally compact Hausdorff space $X$. A subset $U$ of $X$ is \emph{invariant} if $\theta_s(U\cap X_{s^*}) \subseteq U$ for all $s \in S$.  The topological partial action $\theta$ is \emph{minimal} if there is no non-empty, proper and open invariant subset of $X$.
\end{definition}

Let $R$ be a field. It is easy to see that if $U$ is an open invariant subset of $X$ then the associated ideal $I(U)$ is invariant. Conversely, every invariant ideal corresponds to an open invariant subset of $X$. Indeed, suppose that $I$ is an invariant ideal of $\Lc$. By Remark~\ref{idealu} there is an open subset $U$ of $X$ such that $I=I(U)$.
Take $s\in S$, $x \in U\cap X_{s^*}$, and suppose that $\theta_s(x) \notin U$.
Let $K \subseteq U$ be a compact-open neighbourhood of $x$ (it exists since $X$ is zero-dimensional). Notice that the function $1_K$ is contained in $I(U)$. Since $I$ is invariant, $\alpha_{s}(1_K)\in I(U)$, that is, $1_K\circ \theta_{s^*} \in I(U)$. But then we get that 
\begin{displaymath}
	1=1_K(x)=1_K (\theta_{s^*}\circ\theta_{s}(x))=1_K \circ \theta_{s^*} (\theta_{s}(x))=0.
\end{displaymath}
Therefore $U$ is invariant.

From the previous paragraph we obtain the following result.

\begin{prop}\label{lem:MinimalSsimple}
Let $R$ be a field and let $\theta=(\{\theta_s \}_{s\in S}, \{X_s\}_{s\in S})$ be a topological partial action of an inverse semigroup $S$ on a zero-dimensional locally compact Hausdorff space $X$. Then $\theta$ is minimal if, and only if, $\Lc$ is $S$-simple (with respect to the action $\alpha$ associated with $\theta$).
\end{prop}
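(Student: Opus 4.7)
My plan is to deduce the proposition as an immediate consequence of the bijective correspondence, developed in Lemma~\ref{lemmaideal} and Remark~\ref{idealu}, between ideals of $\Lc$ and open subsets of $X$, once it is verified that this correspondence restricts to a bijection between $\theta$-invariant open subsets of $X$ and $S$-invariant ideals of $\Lc$.

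First I would record that, by Remark~\ref{idealu}, the assignments
\begin{displaymath}
U \longmapsto I(U) = \{ f \in \Lc \mid \supp(f) \subseteq U \}, \qquad J \longmapsto U_J := \bigcup_{f \in J}\supp(f),
\end{displaymath}
are mutually inverse bijections between the open subsets of $X$ and the ideals of $\Lc$. Under this bijection, $U = \varnothing$ clearly corresponds to $J = \{0\}$, while $U = X$ corresponds to $J = \Lc$: if $I(U) = \Lc$, then for any $x \in X$, a compact-open neighbourhood $K$ of $x$ (which exists since $X$ is zero-dimensional and locally compact) yields $1_K \in \Lc = I(U)$, forcing $K \subseteq U$ and hence $x \in U$. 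Thus non-empty proper open subsets of $X$ correspond bijectively to non-zero proper ideals of $\Lc$.

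Next I would check that $U$ is $\theta$-invariant if and only if $I(U)$ is $S$-invariant. The paragraph preceding the proposition already establishes the direction $S$-invariant $\Rightarrow$ $\theta$-invariant. For the converse, assume $U$ is $\theta$-invariant and take any $f \in I(U) \cap D_{s^*}$. Then $\supp(f) \subseteq U \cap X_{s^*}$, and since $\alpha_s(f) = f \circ \theta_{s^*}$ on $X_s$ and vanishes off $X_s$, one has $\supp(\alpha_s(f)) = \theta_s(\supp(f)) \subseteq \theta_s(U \cap X_{s^*}) \subseteq U$, so $\alpha_s(f) \in I(U)$, as required.

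Combining the two preceding points, the bijection $U \leftrightarrow I(U)$ identifies non-empty proper $\theta$-invariant open subsets of $X$ with non-zero proper $S$-invariant ideals of $\Lc$, and the proposition follows by taking contrapositives. There is no genuine obstacle here: the heavy lifting was done in Lemma~\ref{lemmaideal} and in the discussion preceding the statement, and the only remaining content is the straightforward translation of "non-empty proper $\theta$-invariant" into "non-zero proper $S$-invariant" through the correspondence.
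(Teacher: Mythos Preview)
Your proposal is correct and follows exactly the approach the paper takes: the paper's proof is simply the sentence ``From the previous paragraph we obtain the following result,'' relying on the correspondence $U \leftrightarrow I(U)$ from Lemma~\ref{lemmaideal}/Remark~\ref{idealu} together with the observation (in the paragraph just before the proposition) that invariance is preserved in both directions. You have merely filled in the detail the paper labels ``easy to see'' (namely $\theta$-invariant $\Rightarrow$ $S$-invariant) and made explicit why $U=X$ corresponds to $J=\Lc$, but the strategy is identical.
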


The notion of a topologically free partial action is already well-known for partial group actions. Recall that, if $G$ is a group with neutral element $1$ then a topological partial action $\theta = \left(\{X_t\}_{t\in G},\{\theta_t\}_{t\in G}\right)$ of $G$ on $X$ is
\emph{topologically free} if, for all $t\neq 1$, the set
\[\Lambda_t(\theta) := \{ x \in X_{t^{-1}} \mid \theta_t(x)\neq x \}\]
is dense in $X_{t^{-1}}$.
This is equivalent to saying that, for all $t\neq 1$, the set 
\[F_t(\theta) := \{ x \in X_{t^{-1}}  \mid  \theta_t(x)=x \}\]
has empty interior. 

\begin{prop}\label{prop:freegroup}
Suppose that $\theta=\left(\{X_t\}_{t\in G},\{\theta_t\}_{t\in G}\right)$ is a topologically free partial action of a group $G$ on $X$. Then $\overline{\Lc\delta_1}$ is maximal commutative in $\Lc \rtimes_\alpha G$.
\end{prop}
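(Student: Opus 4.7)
The plan is to take an arbitrary $c \in \Lc \rtimes_\alpha G$ which commutes with every element of $\overline{\Lc\delta_1}$ and to show that $c \in \overline{\Lc\delta_1}$. I would write $c$ in canonical form $c = \sum_{t \in F}\overline{c_t\delta_t}$ with $F \subseteq G$ finite and $c_t \in D_t = \mathcal{L}_c(X_t)$, and the goal is to prove that $c_t = 0$ for every $t \neq 1$. Note that when $G$ is viewed as an inverse semigroup, the partial order on $G$ is trivial ($s\leq t$ iff $s=t$), so the defining ideal $\mathcal{N}$ is zero and the canonical form is unique; equivalently, one can recover the $\delta_t$-coefficient through the map $\widetilde\tau$ of Remark~\ref{tau} after multiplying by an appropriate $\overline{1_K\delta_{t^{-1}}}$, just as in the proof of Theorem~\ref{teo1}.

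Expanding the commutation relation $\overline{f\delta_1}\cdot c = c\cdot \overline{f\delta_1}$ with the rule $(a\delta_s)(b\delta_t) = \alpha_s(\alpha_{s^{-1}}(a)b)\delta_{st}$ and comparing $\delta_t$-components yields, for each $t \in F$ and every $f \in \Lc$,
\begin{equation*}
 f \cdot c_t \;=\; \alpha_t\bigl(\alpha_{t^{-1}}(c_t)\cdot f\bigr) \quad \text{in } D_t.
\end{equation*}
Using the explicit form $\alpha_t(g)(x) = g(\theta_{t^{-1}}(x))$ for $x\in X_t$, evaluating the above identity at $x \in X_t$ gives
\begin{equation*}
 f(x)\,c_t(x) \;=\; c_t(x)\,f(\theta_{t^{-1}}(x)).
\end{equation*}
For any $x \in \supp(c_t)$ this forces $f(x) = f(\theta_{t^{-1}}(x))$ for every $f \in \Lc$. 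Since $X$ is Hausdorff and zero-dimensional, the characteristic functions of compact-open sets lie in $\Lc$ and separate the points of $X$, so $\theta_{t^{-1}}(x) = x$. Therefore
\begin{equation*}
 \supp(c_t) \;\subseteq\; F_{t^{-1}}(\theta) \;=\; \{ x \in X_t \mid \theta_{t^{-1}}(x) = x \}.
\end{equation*}

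For $t \neq 1$ (equivalently $t^{-1}\neq 1$), topological freeness says that $F_{t^{-1}}(\theta)$ has empty interior in $X_t$. As $\supp(c_t)$ is an open subset of $X_t$ contained in $F_{t^{-1}}(\theta)$, it must be empty, i.e.\ $c_t = 0$. Hence $c = \overline{c_1\delta_1} \in \overline{\Lc\delta_1}$, proving that $\overline{\Lc\delta_1}$ is maximal commutative. The only subtlety is the coefficient-comparison step in the quotient ring $\Lc\rtimes_\alpha G$; as explained above, this is harmless in the group case because the ideal $\mathcal{N}$ collapses to zero, while everything else is a direct consequence of the definitions of $\alpha$, topological freeness, and the point-separating nature of $\Lc$ on a zero-dimensional Hausdorff space.
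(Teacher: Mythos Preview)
Your argument is correct and is exactly the standard one: exploit that for a group the natural partial order is trivial (so $\mathcal{N}=0$ and coefficient comparison is legitimate), derive the pointwise identity $f(x)c_t(x)=c_t(x)f(\theta_{t^{-1}}(x))$, and use characteristic functions of compact-open sets to conclude that $\supp(c_t)\subseteq F_{t^{-1}}(\theta)$, which then has empty interior for $t\neq 1$. This is precisely the line of reasoning in \cite[Proposition~4.7]{GOR} to which the paper defers, so your proof matches the intended one.
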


\begin{proof}
The proof is analogous to the proof of \cite[Proposition 4.7]{GOR}.
\end{proof}

With the intention of generalizing the above result, we will now present and use the definition of topologically principal partial actions of inverse semigroups, which was introduced in \cite{beuterandcordeiro}, and the definition of topologically free (effective)  partial actions of inverse semigroups,  which was introduced in \cite{ExelPardo}.

If $\theta$ is a partial action of an inverse semigroup $S$ on a set $X$, then for any fixed $x\in X$ we define the subset $S_x := \left\{s\in S \mid x\in X_{s^*}\right\}$ of $S$.

\begin{definition}(\cite[Definition~7.1]{beuterandcordeiro})
Let $\theta=\left(\{X_s\}_{s\in S},\{\theta_s\}_{s\in S}\right)$ be a topological partial action of an inverse semigroup $S$ on a locally compact Hausdorff space $X.$ We define
\[\Lambda(\theta):=\left\{x\in X \mid \forall s\in S_x\text{, if }\theta_s(x)=x\text{ then there is }e\in E(S)\text{ with }e\leq s\text{ and }x\in X_e\right\}.\]
We say that $\theta$ is \emph{topologically principal} if, and only if, $\Lambda(\theta)$ is dense in $X$.
\end{definition}

The notion \emph{topologically principal} stems from the fact that, by \cite[Proposition~7.9]{beuterandcordeiro}, the groupoid of germs $S\ltimes X$ associated with a partial action  $\theta=\left(\{X_s\}_{s\in S},\{\theta_s\}_{s\in S}\right)$  is topologically principal if, and only if, the partial action $\theta$ is topologically principal.

\begin{remark}
Notice that if $\theta=\left(\{X_s\}_{s\in S},\{\theta_s\}_{s\in S}\right)$ is a topological partial action of $S$ on $X$ then
\[\Lambda(\theta)=\left\{x\in X \mid \forall s,t\in S_x  \text{ if }\theta_s(x)=\theta_t(x)\text{ then there exists } u\leq s,t\text{ with }x\in X_{u^*} \right\}.\]
In particular, given $x\in X$ and $s,t\in S_x$, if $\theta_s(x)=\theta_t(x)$ and there is $u\leq s,t$ with $x\in X_{u^*}$, then $\theta_s$ and $\theta_t$ coincide in the neighbourhood $X_{u^*}$ of $x$.
\end{remark}

\begin{prop}\label{prop:principalcountable}
Let $S$ be a countable inverse semigroup and let $X$ be a locally compact, Hausdorff, and second-countable topological space $X$. Then a topological partial action $\theta=\left(\{X_s\}_{s\in S},\{\theta_s\}_{s\in S}\right)$ of $S$ on $X$ is topologically principal if, and only if, for any $s\in S$ the set
\begin{displaymath}
\Lambda_{s}(\theta):=\left\{x\in X_{s^*} \mid \text{if }\theta_s(x)=x\text{ then there is }e\in E(S)\text{ with }e\leq s\text{ and }x\in X_e\right\}
\end{displaymath}
is dense in $X_{s^*}$.
\end{prop}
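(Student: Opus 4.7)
My plan is to reformulate both conditions in terms of the interior of certain ``bad'' subsets and then invoke the Baire category theorem. For each $s\in S$, put $B_s(\theta):=X_{s^*}\setminus \Lambda_s(\theta)$, so that $\Lambda_s(\theta)$ is dense in $X_{s^*}$ if and only if $B_s(\theta)$ has empty interior (equivalently in $X_{s^*}$ or in $X$, since $X_{s^*}$ is open in $X$). A routine unpacking of the definitions shows that
\[X\setminus \Lambda(\theta)=\bigcup_{s\in S}B_s(\theta),\]
because a point $x$ violates the condition defining $\Lambda(\theta)$ precisely when some $s\in S_x$ has $\theta_s(x)=x$ but no $e\in E(S)$ with $e\leq s$ satisfies $x\in X_e$, i.e., $x\in B_s(\theta)$.

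The forward implication is then immediate: any non-empty open subset $U$ of $X_{s^*}$ is open in $X$, so $U\cap \Lambda(\theta)\neq \varnothing$ by density, and any such intersection point lies in $\Lambda_s(\theta)$ by definition of $\Lambda(\theta)$.

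For the converse I would apply the Baire category theorem, which $X$ satisfies as a locally compact Hausdorff space. Since $S$ is countable, it suffices to show that each $B_s(\theta)$ is \emph{nowhere dense} in $X$, i.e., that $\overline{B_s(\theta)}$ has empty interior. First, $B_s(\theta)$ is closed in the open subspace $X_{s^*}$, being the intersection of the fixed-point set of $\theta_s$ (relatively closed by continuity of $\theta_s$ and the Hausdorff property of $X$) with the $X$-closed set $X\setminus \bigcup_{e\in E(S),\,e\leq s}X_e$. It need not, however, be closed in $X$, which is the main technical subtlety.

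To handle this, suppose $U\subseteq X$ is open with $U\subseteq \overline{B_s(\theta)}$. Then $U\cap X_{s^*}$ is open in $X$ and is contained in $\overline{B_s(\theta)}\cap X_{s^*}$, which equals $B_s(\theta)$ because $B_s(\theta)$ is closed in $X_{s^*}$; the empty-interior hypothesis on $B_s(\theta)$ therefore forces $U\cap X_{s^*}=\varnothing$. Hence $U\subseteq \overline{X_{s^*}}\setminus X_{s^*}=\partial X_{s^*}$, and since the topological boundary of an open set has empty interior, $U=\varnothing$. Thus each $\overline{B_s(\theta)}$ has empty interior in $X$, so Baire yields that $\bigcup_{s\in S}\overline{B_s(\theta)}$ has empty interior; its subset $X\setminus \Lambda(\theta)$ does too, showing that $\Lambda(\theta)$ is dense in $X$.
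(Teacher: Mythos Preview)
Your proof is correct and uses the same core tool as the paper---the Baire category theorem applied over the countable index set $S$ in a locally compact Hausdorff space---but you phrase it dually. The paper observes that each $\Lambda_s(\theta)$ is \emph{open} in $X$ (being the union of $\{x\in X_{s^*}:\theta_s(x)\neq x\}$ with $\bigcup_{e\leq s,\,e\in E(S)}X_e$), forms the dense open sets $\Lambda_s(\theta)\cup\operatorname{int}(X\setminus X_{s^*})$, and intersects; you instead pass to the complements $B_s(\theta)$, show they are nowhere dense, and take the union. Your treatment is arguably more careful on one point: the paper writes $\Lambda(\theta)=\bigcap_{s}\bigl(\Lambda_s(\theta)\cup\operatorname{int}(X\setminus X_{s^*})\bigr)$, but strictly speaking only the inclusion $\supseteq$ holds (a point of $\Lambda(\theta)$ lying on $\partial X_{s^*}$ for some $s\notin S_x$ would fail to lie in the right-hand side), whereas your boundary argument for nowhere-denseness of $B_s(\theta)$ handles exactly this issue. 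Neither argument actually uses second-countability of $X$; local compactness and Hausdorffness suffice for Baire.
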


\begin{proof}
The ''only if'' statement is easy to show, using that
\begin{displaymath}
	\Lambda(\theta) \cap X_{s^*} \subseteq \Lambda_{s}(\theta).
\end{displaymath}
Notice that in this direction we do not need to use the fact that $S$ is countable.

Now we show the ''if'' statement.
Notice that $\Lambda_{s}(\theta)$ is an open subset of $X$ and that  $\Lambda_{s}(\theta) \cup \text{int}(X\setminus X_{s^*})$ is dense in $X$.
Thus
\begin{displaymath}
	\Lambda(\theta)=\bigcap_{s\in S}\left( \Lambda_{s}(\theta) \bigcup \text{int}(X\setminus X_{s^*})\right)
\end{displaymath}
is dense in $X$ by the Baire category theorem.
\end{proof}

\begin{definition}\label{def:topfree} (\cite[Definition~4.1]{ExelPardo})
Let $\theta=\left(\{X_s\}_{s\in S},\{\theta_s\}_{s\in S}\right)$ be a topological partial action of an inverse semigroup $S$ on a locally compact Hausdorff space $X.$ We say that $\theta$ is \emph{topologically free} or \emph{effective} if, and only if,
\begin{displaymath}
 \Int \left\lbrace x \in X_{s^*} \mid  \theta_{s}(x)=x \right\rbrace = \{ x \in X_{s^*}  \mid \text{ there is } \ e \in E(S) \text{ such that } e\leq s \text{ and } x \in X_e \}
\end{displaymath}
for all $s \in S$.
\end{definition}

\begin{remark}
(a) If $S$ is a group, then Definition~\ref{def:topfree} coincides with the usual definition of a topologically free partial action of a group.

(b) By \cite[Thereom~4.7]{ExelPardo},
the groupoid of germs $S\ltimes X$ associated with a topological partial action $\theta=\left(\{X_s\}_{s\in S},\{\theta_s\}_{s\in S}\right)$  is effective if, and only if, $\theta$  is topologically free.
\end{remark}

\begin{lemma}\label{lem:topfreeimpliestopprincipal}
Let $S$ be a countable inverse semigroup and let $X$ be a locally compact, Hausdorff, and second-countable topological space $X$.
If $\theta=\left(\{X_s\}_{s\in S},\{\theta_s\}_{s\in S}\right)$ is a topologically free partial action of $S$ on $X$, then $\theta$ is topologically principal.
\end{lemma}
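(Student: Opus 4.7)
The plan is to reduce the statement to a condition on each $s\in S$ individually via Proposition~\ref{prop:principalcountable}, and then show that this per-element condition follows directly from the definition of topological freeness. The countability/second-countability hypotheses will only be used to invoke Proposition~\ref{prop:principalcountable}; the rest of the argument is purely set-theoretic.

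More precisely, for each $s\in S$ introduce the abbreviations
\begin{displaymath}
F_s := \{x \in X_{s^*} \mid \theta_s(x) = x\}, \qquad E_s := \{x \in X_{s^*} \mid \exists\, e \in E(S) \text{ with } e \leq s \text{ and } x \in X_e\}.
\end{displaymath}
The first step is to observe that $\Lambda_s(\theta) = (X_{s^*}\setminus F_s) \cup E_s$, so its complement in the open set $X_{s^*}$ is precisely $F_s \setminus E_s$. Thus $\Lambda_s(\theta)$ is dense in $X_{s^*}$ if and only if $F_s \setminus E_s$ has empty interior in $X_{s^*}$ (equivalently, in $X$).

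The key step is a one-line use of topological freeness (Definition~\ref{def:topfree}), which asserts that $\Int(F_s) = E_s$. Indeed, if $U \subseteq F_s \setminus E_s$ is open in $X$ (and contained in $X_{s^*}$), then $U \subseteq F_s$ forces $U \subseteq \Int(F_s) = E_s$; combined with $U \cap E_s = \varnothing$ this yields $U = \varnothing$. Therefore $\Lambda_s(\theta)$ is dense in $X_{s^*}$ for every $s \in S$.

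Finally, I would invoke Proposition~\ref{prop:principalcountable} (which uses that $S$ is countable and $X$ is second-countable, via the Baire category theorem) to conclude from the density of each $\Lambda_s(\theta)$ in $X_{s^*}$ that $\Lambda(\theta)$ is dense in $X$, i.e.\ that $\theta$ is topologically principal. There is no real obstacle here — the only subtlety to keep in mind is that the equality $\Lambda_s(\theta) = (X_{s^*}\setminus F_s) \cup E_s$ holds with $E_s$ on the right side (not just in the "if" direction), so that the complement is exactly $F_s\setminus E_s$ and topological freeness applies cleanly.
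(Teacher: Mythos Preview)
Your proposal is correct and follows essentially the same approach as the paper: both arguments reduce to the per-$s$ density condition via Proposition~\ref{prop:principalcountable}, then use the identity $\Int(F_s)=E_s$ from topological freeness to conclude. The only difference is cosmetic: the paper argues by contrapositive (picking a witness $y\in X_{s^*}\setminus\overline{\Lambda_s(\theta)}$ and producing an open $U\subseteq F_s$ with $y\notin E_s$), whereas you phrase the same step directly by writing $\Lambda_s(\theta)=(X_{s^*}\setminus F_s)\cup E_s$ and showing $F_s\setminus E_s$ has empty interior.
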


\begin{proof}
Suppose that $\theta$ is not topologically principal. We will show that $\theta$ is not topologically free.
By Proposition~\ref{prop:principalcountable}, there is some $s\in S$ such that $\Lambda_s(\theta)$ is not dense in $X_{s^*}$. 
Now, pick some $y \in X_{s^*}$ such that $y \notin \Lambda_s(\theta)$ and
$y$ is not a limit point of $\Lambda_s(\theta)$.
Notice that $\{x\in X_{s^*} \mid \theta_s(x) \neq x\} \subseteq \Lambda_s(\theta)$.
Clearly,  $y\in \{x\in X_{s^*}\mid \theta_s(x)=x\}$. Moreover, there is an open neighbourhood $U$ of $y$ such that
$U\cap\Lambda_s(\theta)=\emptyset$. Thus, $U \subseteq \{x\in X_{s^*}\mid \theta_s(x)=x\}$. This shows that $y \in \Int \{x\in X_{s^*}\mid \theta_s(x)=x\}$
and therefore $\theta$ is not topologically free.
\end{proof}

The next example shows that the conclusion of  Lemma~\ref{lem:topfreeimpliestopprincipal} does not hold for an arbitrary topological space $X$.

\begin{example}
Let $K$ denote the Cantor set and equip $ \mathbb{T}=\{e^{i\omega} \mid \omega \in \mathbb{R}\}$ with the discrete topology.
Consider the topological product space
$X=(K\cap (0,1))\times \mathbb{T}$. Define an action $\theta$ of the additive group $\mathbb{R}$ on $X$ by
\begin{displaymath}
    \theta_t(s, e^{i\omega})=(s, e^{i(\omega+2st\pi)})
\end{displaymath}
for $t\in \mathbb{R}$ and $(s,e^{i\omega}) \in X$.
For any $t\in\mathbb{R}\setminus \{0\}$, we have that
\[\Int\{(s,e^{i\omega}) \in X \mid \theta_t(s,e^{i\omega})=(s,e^{i\omega})\}=\Int\{(s,e^{i\omega}) \in X \mid st\in \mathbb{Z}\}=\emptyset,\]
and therefore $\theta$ is topologically free.
However, $\theta$ is not topologically principal.
Indeed, let $x=(s,e^{i\omega}) \in X$ be arbitrary.
Put $t=\frac{1}{s}$ and notice that
$\theta_{t}(x)=\theta_{\frac{1}{s}}(s, e^{i\omega})=(s,e^{i(\omega+2\pi)})=(s, e^{i\omega})=x$.
But $0$ is the only idempotent element of the additive group $\mathbb{R}$,
and using that
$t=\frac{1}{s} \neq 0$
we conclude that
$0 \not \leq \frac{1}{s}$. In other words, $(s,e^{i\omega}) \notin \Lambda(\theta)$.
This shows that $\Lambda(\theta)=\emptyset$, and in particular $\theta$ is not topologically principal.
\end{example}

The next example shows that the converse of Lemma~\ref{lem:topfreeimpliestopprincipal} does not hold. That is, there is a topologically principal partial action $\theta$ of a countable inverse semigroup $S$ on a second-countable space $X$ (locally compact, Hausdorff and zero-dimensional) such that $\theta$ is not topologically free.

\begin{example}\label{ex:two-headed-snake}
We shall consider a particular case of the \emph{Munn representation} (see \cite{MR0262402}). Given an inverse semigroup $S$, we consider the set $X=E(S)$ equipped with the discrete topology. Moreover, for $s\in S$, we put $X_s=\left\{e\in E(S) \mid e\leq ss^*\right\}$ and $\theta_s(e)=ses^*$, for all $e\in X_{s^*}$. Then  $\theta=\left(\{X_s\}_{s\in S},\{\theta_s\}_{s\in S}\right)$ is the Munn representation of $S$.

Let us now consider the inverse semigroup $S=\mathbb{N}\cup\{\infty, z\}$ whose product, for any $m,n\in \mathbb{N}$, is given by
\begin{displaymath}
nm=\min(n,m),\quad n\infty=\infty n=nz=zn=n,\qquad z\infty=\infty z=z\quad\text{and}\quad zz=\infty\infty=\infty. 
\end{displaymath}
Then $X=E(S)=\mathbb{N}\cup\left\{\infty\right\}$, can be seen as the one-point compactification of the natural numbers. Notice that the compact-open sets of $X$ are either cofinite or contained in $\mathbb{N}$. Now, let $\theta$ be the Munn representation of $S=\mathbb{N}\cup\{\infty, z\}$. More precisely,  
\begin{itemize}
\item for $n \in \mathbb{N}$, $X_n=\{1, 2, \ldots, n\}$ and $\theta_n=\Identity_{X_n}$; 
\item $X_\infty=\mathbb{N}\cup\{\infty\}$ and $\theta_\infty=\Identity_{X_\infty}$;
\item $X_z=\mathbb{N}\cup\{\infty\}$ and $\theta_z=\Identity_{X_z}$.
\end{itemize}
Notice that $S$ is countable and that $X$ is second-countable.

Since $n, \infty \in E(S)$, clearly $\Lambda_{n}(\theta)=X_n$ and $\Lambda_{\infty}(\theta)=X_\infty$.
Moreover, we notice that $\Lambda_{z}(\theta)= \mathbb{N}$ which is a dense subset of $\mathbb{N}\cup \{\infty\}=X_z$. Hence, by Proposition~\ref{prop:principalcountable},  $\theta$ is topologically principal.

Notice that $\theta$ is not a topologically free partial action. Indeed, 
\[ \Int\{x \in X_{z^*} \mid \theta_z(x)=x\}=\Int\left(\mathbb{N}\cup\{\infty\}\right)=\mathbb{N}\cup\{\infty\}, \]
but $\{x\in X_{z^*} \mid \text{ there is } e \in E(S) \text{ such that } e \leq z \text{ and } x \in X_e \}= \mathbb{N}$.

Let $\alpha$ be the partial action of $S$ on $\Lc$ associated with $\theta$, then 
\begin{itemize}
\item for $n \in \mathbb{N}$, $D_n\simeq \mathcal{L}_c(\{1, 2, \ldots, n\})$ and $\alpha_n=\Identity_{D_n}$. 
\item $D_\infty\simeq \mathcal{L}(\mathbb{N}\cup\{\infty\})$ and $\alpha_\infty=\Identity_{D_\infty}$.
\item $D_z=\mathcal{L}(\mathbb{N}\cup\{\infty\})$ and $\alpha_z=\Identity_{D_z}$.
\end{itemize}

Now we will see that the diagonal of $\Lc\rtimes_{\alpha} S$ is not a maximal commutative subring. Fix $n\in \mathbb{N}$. We denote by $1_{[n, \infty]}$ the characteristic function of the set $\{n, n+1, n+2, \ldots \}\cup\{\infty\}$. We have that $1_{[n, \infty]} \in D_z \simeq \mathcal{L}(\mathbb{N}\cup \{\infty \})$ and that $\overline{1_{[n, \infty]}\delta_z}$ does not belong to the diagonal $\mathcal{D}$ of $\Lc\rtimes_{\alpha} S$.
It is not difficult to see that $\overline{1_{[n, \infty]}\delta_z}$ commutes with all elements of the diagonal.

It is also worth noticing that in this example the ring $\Lc\rtimes_\alpha S$ has an infinite number of non-zero ideals whose intersection with the diagonal is zero. Indeed, for each $n \in \mathbb{N}$, consider the ideal  $\mathcal{J}$ generated by
\[\overline{1_{[n,\infty]}\delta_z} - \overline{1_{[n,\infty]}\delta_\infty}\]
and notice that 
\[\mathcal{J}=\left\lbrace \sum_{i=1}^k \overline{r_i1_{[n_i,\infty]}\delta_z}- \overline{r_i1_{[n_i,\infty]}\delta_\infty} \ \ \Big\lvert \ \
n_i \geq n \text{ and } r_i \in R \right\rbrace.\]
In this case, $\supp\left(\tau{\left(\overline{f}\right)}\right)=\emptyset$,
for every $\overline{f} \in \mathcal{J}$.
\end{example}

Next we present a sufficient condition to obtain the ideal intersection property for the skew inverse semigroup ring arising from a topologically principal partial action. 

\begin{prop}\label{prop:topprincipal}
Let $\theta=(\{\theta_s \}_{s\in S}, \{X_s\}_{s\in S})$ be a topologically principal partial action of $S$ on a zero-dimensional locally compact Hausdorff space $X$.
If $\mathcal{I}$ is a non-zero ideal of $\Lc \rtimes_{\alpha} S$, and there is some $\overline f \in \mathcal{I}$ such that $\supp\left(\widetilde\tau\left(\overline f\right)\right)\neq \emptyset$, then $\mathcal{I}\cap \mathcal{D} \neq \{0\}$.
\end{prop}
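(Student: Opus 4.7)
\emph{Proof plan.} The approach mirrors the proof of Theorem~\ref{teo1}: pick a ``minimal'' witness in $\mathcal{I}$ and then produce a non-zero diagonal element by suitable left/right multiplications by characteristic functions. Among all $\overline g\in\mathcal{I}$ with $\supp(\widetilde\tau(\overline g))\neq\emptyset$ (non-empty by hypothesis), choose $\overline f$ with $n(\overline f)$ minimal; write $\overline f=\sum_{s\in F}\overline{f_s\delta_s}$ with $|F|=n(\overline f)$. Topological principality gives density of $\Lambda(\theta)$ in $X$, so one may pick $x_0\in\supp(\widetilde\tau(\overline f))\cap\Lambda(\theta)$. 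Fix $e_0\in E(S)$ with $x_0\in X_{e_0}$, and let $V$ be a compact-open neighbourhood of $x_0$ inside $X_{e_0}$, to be shrunk as needed.

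\emph{Step~1: every $f_s$ is non-vanishing at $x_0$.} Shrink $V$ so that $f_s$ vanishes on $V$ whenever $f_s(x_0)=0$ (using that each $f_s$ is locally constant). The element $\overline h:=\overline{1_V\delta_{e_0}}\cdot\overline f\in\mathcal{I}$ rewrites, via the multiplication rule and Remark~\ref{rem:calc} (applied to $e_0 s\leq s$), as $\overline h=\sum_{s\in F^0(x_0)}\overline{1_V f_s\delta_s}$, where $F^0(x_0):=\{s\in F:\,f_s(x_0)\neq 0\}$. Because $\widetilde\tau(\overline h)(x_0)=\widetilde\tau(\overline f)(x_0)\neq 0$, minimality of $n(\overline f)$ forces $F^0(x_0)=F$.

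\emph{Step~2: the points $y_s:=\theta_{s^*}(x_0)\in X_{s^*}$ ($s\in F$) are pairwise distinct.} If $y_s=y_{s'}$ for some distinct $s,s'\in F$, the second characterisation of $\Lambda(\theta)$ (in the remark following its definition) produces $u\in S$ with $u\leq s^*$, $u\leq s'^*$ and $x_0\in X_{u^*}$. Shrink $V$ to be contained in $X_{u^*}$; then $1_V f_s,1_V f_{s'}\in D_{u^*}$, and Remark~\ref{rem:calc} (applied to $u^*\leq s$ and $u^*\leq s'$) lets one rewrite $\overline{1_V f_s\delta_s}=\overline{1_V f_s\delta_{u^*}}$ and $\overline{1_V f_{s'}\delta_{s'}}=\overline{1_V f_{s'}\delta_{u^*}}$, which combine into the single term $\overline{1_V(f_s+f_{s'})\delta_{u^*}}$. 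The resulting representation of $\overline h$ has at most $|F|-1$ summands and still satisfies $\widetilde\tau(\overline h)(x_0)\neq 0$, contradicting minimality. Hence the $y_s$ are pairwise distinct.

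\emph{Step~3: extract the diagonal.} Using Hausdorffness of $X$ and continuity of the finitely many maps $\theta_{s^*}$, shrink $V$ once more so that $V\subseteq X_s$ for every $s\in F$ and the open sets $\{\theta_{s^*}(V)\}_{s\in F}$ are pairwise disjoint. Fix any $s_0\in F$ and expand
\begin{displaymath}
\overline h\cdot\overline{\alpha_{s_0^*}(1_V)\delta_{s_0^*}}=\sum_{s\in F}\overline{\alpha_s\bigl(\alpha_{s^*}(1_V f_s)\cdot\alpha_{s_0^*}(1_V)\bigr)\delta_{ss_0^*}}.
\end{displaymath}
For $s\neq s_0$ the factor $\alpha_{s^*}(1_V)\cdot\alpha_{s_0^*}(1_V)=1_{\theta_{s^*}(V)\cap\theta_{s_0^*}(V)}=0$, so the term vanishes; for $s=s_0$ the expression collapses to $\overline{1_V f_{s_0}\delta_{s_0 s_0^*}}$, which is diagonal (since $s_0 s_0^*\in E(S)$) and non-zero because $f_{s_0}(x_0)\neq 0$. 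Thus $\mathcal{I}\cap\mathcal{D}\neq\{0\}$. The main difficulty is Step~2: the support hypothesis on $\widetilde\tau(\overline f)$ is what makes the minimality argument available (cf.~Example~\ref{ex:two-headed-snake}, where every element of the exhibited ideal is killed by $\widetilde\tau$), while topological principality is precisely the tool that prevents the ghost points $y_s$ from coinciding.
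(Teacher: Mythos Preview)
Your argument is correct, but it follows a genuinely different route from the paper's.

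The paper does \emph{not} invoke a minimality argument. Given the single witness $\overline f=\sum_{s\in F}\overline{f_s\delta_s}$ from the hypothesis, it picks $x\in\supp(\widetilde\tau(\overline f))\cap\Lambda(\theta)$ and partitions $\{s\in F:\,f_s(x)\neq 0\}$ by the relation $s\sim t\Leftrightarrow\theta_{s^*}(x)=\theta_{t^*}(x)$. Since $\sum_s f_s(x)\neq 0$, some class $T$ has non-zero partial sum $r$. A right multiplication by a suitable characteristic function cuts $\overline f$ down to the terms in $T$; then topological principality (iterated over $T$) produces a single $u$ with $u\leq s^*$ for every $s\in T$ and $x\in X_{u^*}$, so all surviving summands merge into one $\delta_{u^*}$-term. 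A final left/right multiplication yields the non-zero diagonal element $\overline{r\,1_K\,\delta_{u^*u}}$.

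By contrast, you first pass to a minimal witness and then use topological principality \emph{pairwise} in Step~2: if two of the points $y_s$ coincide, you combine the corresponding summands and contradict minimality; hence all $y_s$ are distinct, and Step~3 isolates a single summand directly. Your route is closer in spirit to the proof of Theorem~\ref{teo1} and makes the logical role of the hypothesis $\supp(\widetilde\tau(\overline f))\neq\emptyset$ very transparent (it is what keeps the minimality descent going). The paper's route is a bit more direct in that it works with an arbitrary witness, and it shows that topological principality can be used to collapse an entire equivalence class at once rather than two terms at a time.
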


\begin{proof}
Let $\mathcal{I}$ be a non-zero ideal of $\Lc\rtimes_{\alpha} S$ and let    \[\overline f = \sum_{s \in F} \overline{f_s\delta_s} \in \mathcal{I}\] be such that $\supp(\widetilde\tau(\overline f))\neq \emptyset$. Since $\supp(\widetilde\tau(\overline f))$ is an open subset of $X$, and $\theta$ is  topologically principal, there is some $x \in \supp\left(\widetilde\tau\left(\overline f\right)\right)\cap \Lambda(\theta)$. We fix this $x$ throughout the rest of the proof.

Notice that the subset
\[\left\lbrace \theta_{s^*}(x) \mid s\in F \text{ and } f_s(x)\neq 0 \right\rbrace \]
is non-empty and finite.

We choose $s_1\in F$ such that $f_{s_1}(x)\neq 0$ and 
\[r:=\displaystyle\sum_{s \in T} f_s(x) \neq 0,\]
where $T:=\{s\in F \mid f_s(x)\neq 0 \text{ and } \theta_{s^*}(x)=\theta_{s_1^*}(x)\}$.

Let $y:=\theta_{s_1^*}(x)$. Furthermore, let $B$ be a compact-open neighbourhood of $x$ contained in $X_{s_1}\cap X_e$, for some $e \in E(S)$ such that 
\[\left\lbrace \theta_{s^*}(x) \ \mid \ s\in F \text{ and } f_s(x) \neq 0 \right\rbrace \cap B = \left\lbrace y \right\rbrace.\]
We have that $\overline{g}=\overline{f} \cdot \overline{1_B\delta_e} \in \mathcal{I}$ and that
\[\overline{g}=\sum_{s\in F} \overline{\alpha_s(\alpha_{s^*}(f_s)1_B)\delta_{es}}=  \sum_{s\in F} \overline{\alpha_s(\alpha_{s^*}(f_s)1_B)\delta_s}.\]
Put  $g_s:=\alpha_s(\alpha_{s^*}(f_s)1_B)$, and notice that $\supp(g_s)\subseteq \supp(f_s)\cap \theta_s(B)$. Then
\[\left\lbrace\theta_{s^*}(x) \mid s \in F \text{ and } g_s(x)\neq 0 \right\rbrace=\{y\}.\]
Furthermore,
\[\{s\in F \mid g_s(x)\neq 0 \}=\{s\in F \mid f_s(x)\neq 0 \text{ and } \theta_{s^*}(x)=\theta_{s_1^*}(x)\}=T.\]

Using that $x \in \Lambda(\theta)$, there is some $u\in S$ such that $x\in X_{u^*}$ and $u\leq s^*$, for all $s \in T$. Let $C\subseteq X_{u^*}$ be a compact-open neighbourhood of $x$. We may now rewrite $\overline{g}$ as
\begin{align*}
\overline{g} 
  & = \sum_{s\in F : g_s(x)=0} \overline{g_s\delta_s} + \sum_{s\in F :g_s(x)\neq 0} \overline{(g_s1_C-g_s1_{X\setminus C})\delta_s} \\
  & = \sum_{s\in F : g_s(x)=0} \overline{g_s\delta_s} + \sum_{s\in F :g_s(x)\neq 0} \overline{g_s1_C\delta_s}-\sum_{s\in F :g_s(x)\neq 0}\overline{g_s1_{X\setminus C}\delta_s} \\
  & = \sum_{s\in F : g_s(x)=0} \overline{g_s\delta_s} + \sum_{s\in F :g_s(x)\neq 0} \overline{g_s1_C\delta_{u^*}}-\sum_{s\in F :g_s(x)\neq 0}\overline{g_s1_{X\setminus C}\delta_s}.
\end{align*}

Since each $g_s$ is locally constant,  we can find another compact-open neighbourhood $K$ of $x$ contained in $C$ such that $g_s|_K$ is constant, for all $s\in F$, and that $K\subseteq X_v$, for some $v\in E(S)$.

Thus, $\overline{1_K\delta_v}\cdot \overline{g}\cdot \overline{\alpha_{u}(1_K)\delta_{u}} \in \mathcal{I}$ and

\begin{align*}
\overline{1_K\delta_v}\cdot \overline{g} \cdot \overline{\alpha_{u}(1_K)\delta_u}
 & = \sum_{s\in F :g_s(x)\neq 0} \overline{1_Kg_s1_C\delta_{vu^*}}\cdot \overline{\alpha_{u}(1_K)\delta_u} \\
 & = \overline{\left(\sum_{s\in F :g_s(x)\neq 0} g_s(x)\right)1_K \delta_{vu^*}}\cdot \overline{\alpha_{u}(1_K)\delta_u}\\
 & = \overline{r1_K \delta_{u^*}}\cdot \overline{\alpha_{u}(1_K)\delta_u}\\
 & = \overline{\alpha_{u^*}(\alpha_u(r1_K)\alpha_u(1_K))\delta_{u^*u}} \\
 &= \overline{r1_K\delta_{u^*u}} \in \mathcal{I}\cap \mathcal{D},
\end{align*}
where $r\neq 0$.
\end{proof}

\begin{corollary}\label{cor:topprincipalmaxcom}
Let $\theta=(\{\theta_s \}_{s\in S}, \{X_s\}_{s\in S})$ be a topologically principal partial action of $S$ on 
a zero-dimensional locally compact Hausdorff space $X$. If for each non-zero ideal $\mathcal{I}$ of $\Lc\rtimes_{\alpha} S$ there is some $\overline{f} \in \mathcal{I}$ such that $\supp(\widetilde\tau(\overline f))\neq \emptyset$, then the diagonal $\mathcal{D}$ is a maximal commutative subring of $\Lc\rtimes_{\alpha} S$.
\end{corollary}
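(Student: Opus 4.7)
The plan is to combine Proposition~\ref{prop:topprincipal} with Theorem~\ref{teo1} essentially verbatim, since the corollary is a direct consequence of these two results together with the identification $\A \cong \mathcal{D}$ furnished by Proposition~\ref{prop:DisoA}.

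First I would recall Theorem~\ref{teo1}: for the associative commutative ring $\A = \Lc$, the subring $\A \cong \mathcal{D}$ is maximal commutative in $\Lc \rtimes_\alpha S$ if and only if every non-zero ideal $\mathcal{I}$ of $\Lc \rtimes_\alpha S$ satisfies $\mathcal{I} \cap \mathcal{D} \neq \{0\}$. So it suffices to verify this ideal intersection property under the hypotheses of the corollary.

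To that end, let $\mathcal{I}$ be an arbitrary non-zero ideal of $\Lc \rtimes_\alpha S$. By the hypothesis of the corollary, there exists some $\overline{f} \in \mathcal{I}$ with $\supp(\widetilde{\tau}(\overline{f})) \neq \emptyset$. Since $\theta$ is topologically principal on the zero-dimensional locally compact Hausdorff space $X$, Proposition~\ref{prop:topprincipal} applies and yields $\mathcal{I} \cap \mathcal{D} \neq \{0\}$. As $\mathcal{I}$ was arbitrary, the ideal intersection property holds, and Theorem~\ref{teo1} then gives that $\mathcal{D} \cong \Lc$ is a maximal commutative subring of $\Lc \rtimes_\alpha S$, as required.

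There is no real obstacle here, since the two non-trivial inputs (topological principality forcing the intersection to be non-zero whenever one has an element with non-empty diagonal support, and the equivalence between maximal commutativity and the ideal intersection property) have already been established. The corollary is essentially just the packaging of Proposition~\ref{prop:topprincipal} into the form needed to invoke Theorem~\ref{teo1}, and the proof should therefore be short (a few lines).
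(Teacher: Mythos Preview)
Your proposal is correct and follows exactly the same route as the paper, which simply cites Proposition~\ref{prop:topprincipal} and Theorem~\ref{teo1}. Your write-up just spells out the obvious logical packaging that the paper leaves implicit.
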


\begin{proof}
This follows from Proposition~\ref{prop:topprincipal} and Theorem~\ref{teo1}.
\end{proof}

Finally, we show that maximal commutativity of $\mathcal{D}$ in $\Lc\rtimes_{\alpha} S$ implies that the underlying action is topologically principal (we also show the condition involving ideals and support of elements in $\Lc\rtimes_{\alpha} S$). 

\begin{prop}\label{lem:maxcommtopprincipal}
Suppose that $S$ is countable, $X$ is a second-countable, zero-dimensional and locally compact Hausdorff space, and $\theta=(\{\theta_s \}_{s\in S}, \{X_s\}_{s\in S})$ is a topological partial action of $S$ on $X$. If the diagonal $\mathcal{D}$ is a maximal commutative subring of $\Lc\rtimes_{\alpha} S$, then $\theta$ is topologically principal, and for each non-zero ideal $\mathcal{I}$ of $\Lc\rtimes_{\alpha} S$ there is some $\overline{f} \in \mathcal{I}$ such that $\supp(\widetilde\tau(\overline f))\neq \emptyset$.
\end{prop}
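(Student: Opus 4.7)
The proposition packages two assertions; the support claim is immediate from the hypothesis, while topological principality is the substantive part.

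For the support claim, Theorem~\ref{teo1} combined with maximal commutativity of $\mathcal{D}$ guarantees that every non-zero ideal $\mathcal{I}$ of $\Lc\rtimes_\alpha S$ meets $\mathcal{D}$ non-trivially. Pick $\overline{f}\neq 0$ in $\mathcal{I}\cap\mathcal{D}$; since $\widetilde\tau|_{\mathcal{D}}$ is the inverse of the isomorphism $\phi\colon\Lc\to\mathcal{D}$ from Proposition~\ref{prop:DisoA}, the element $\widetilde\tau(\overline{f})$ is a non-zero function in $\Lc$, and hence has non-empty support.

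I prove topological principality by contrapositive. Assume $\theta$ is not topologically principal. Proposition~\ref{prop:principalcountable} (whose hypotheses hold by assumption) furnishes $s\in S$ with $\Lambda_s(\theta)$ not dense in $X_{s^*}$; choose a non-empty open $V\subseteq X_{s^*}\setminus\overline{\Lambda_s(\theta)}$ and, using zero-dimensionality of $X$, shrink it to a non-empty compact-open $K\subseteq V$. By construction every $x\in K$ satisfies $\theta_s(x)=x$ \emph{and} admits no $e\in E(S)$ with $e\leq s$ and $x\in X_e$; consequently $K\subseteq X_s\cap X_{s^*}\cap X_{s^*s}$ and $\alpha_{s^*}(1_K)=1_K=\alpha_s(1_K)$ on $X$. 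The candidate centraliser element is $\overline{1_K\delta_s}$. For an arbitrary generator $\overline{a\delta_e}\in\mathcal{D}$ (with $e\in E(S)$, $a\in D_e$), both products unfold, via $\alpha_e=\id_{D_e}$ and the identities above, to $\overline{a\,1_K\,\delta_{es}}$ and $\overline{a\,1_K\,\delta_{se}}$; since $es\leq s$ and $se\leq s$, Remark~\ref{rem:calc} rewrites each as $\overline{a\,1_K\,\delta_s}$, and commutativity of $\A$ yields the desired equality.

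The main obstacle is to show $\overline{1_K\delta_s}\notin\mathcal{D}$, equivalently that $c:=\overline{1_K\delta_s}-\overline{1_K\delta_{s^*s}}$ is non-zero in $\Lc\rtimes_\alpha S$ (were $\overline{1_K\delta_s}$ in $\mathcal{D}$, then by Proposition~\ref{prop:DisoA} it would equal $\phi(\widetilde\tau(\overline{1_K\delta_s}))=\phi(1_K)=\overline{1_K\delta_{s^*s}}$, using $K\subseteq X_{s^*s}$). I plan to separate the two terms via a group-algebra evaluation at the isotropy of the germ groupoid $S\ltimes X$. Fix $x\in K$, let $I_x$ denote the isotropy group at $x$, and define an additive map
\begin{equation*}
\mathrm{ev}_x\colon\mathcal{L}\to R[I_x],\qquad \mathrm{ev}_x(a\delta_t)=\begin{cases} a(x)\,[t,x], & \text{if } x\in X_{t^*}\text{ and }\theta_t(x)=x,\\ 0, & \text{otherwise}.\end{cases}
\end{equation*}
For each generator $a\delta_r-a\delta_t$ of $\mathcal{N}$ ($r\leq t$, $a\in D_r$) I verify $\mathrm{ev}_x(a\delta_r)=\mathrm{ev}_x(a\delta_t)$: when $a(x)\neq 0$ one has $x\in X_r\subseteq X_t$, and a short diagram chase using Proposition~\ref{prop:partialaction}\eqref{prop:restriction} (which gives $\theta_r=\theta_t$ on $X_{r^*}$) together with injectivity of $\theta_t\colon X_{t^*}\to X_t$ shows that the isotropy condition for $t$ at $x$ forces $x\in X_{r^*}$ and the same isotropy for $r$; then $r\leq t$ with $x\in X_{r^*}$ yields $[r,x]=[t,x]$ in $S\ltimes X$. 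By Lemma~\ref{Idescription}, $\mathrm{ev}_x(\mathcal{N})=0$, so $\mathrm{ev}_x$ descends to $\Lc\rtimes_\alpha S$. Finally $\mathrm{ev}_x(c)=[s,x]-[s^*s,x]\in R[I_x]$, and these two germs differ: $[s,x]=[s^*s,x]$ would produce some $u\leq s$ and $u\leq s^*s$ with $x\in X_{u^*}$, and $u\leq s^*s\in E(S)$ forces $u\in E(S)$, giving an idempotent $u\leq s$ with $x\in X_u$, in direct contradiction with the defining property of $K$. Hence $c\neq 0$, so $\overline{1_K\delta_s}$ centralises $\mathcal{D}$ without lying in it, violating maximal commutativity of $\mathcal{D}$.
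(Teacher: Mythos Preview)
Your proof is correct and follows essentially the same strategy as the paper: prove the contrapositive by locating a compact-open set $K\subseteq X_{s^*}\setminus\Lambda_s(\theta)$ and exhibiting $\overline{1_K\delta_s}$ (the paper uses $\overline{1_B\delta_{s^*}}$, an inessential difference) as a centraliser of $\mathcal{D}$ not lying in $\mathcal{D}$; the support claim is handled identically via Theorem~\ref{teo1}.

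The one substantive difference is your treatment of the claim $\overline{1_K\delta_s}\notin\mathcal{D}$. The paper simply asserts this from the fact that no $x\in K$ lies in $X_e$ for any idempotent $e\leq s$, without further argument. You instead reduce to showing $\overline{1_K\delta_s}\neq\overline{1_K\delta_{s^*s}}$ and separate the two via the evaluation map $\mathrm{ev}_x$ into the isotropy group algebra $R[I_x]$ of the groupoid of germs. This is a genuinely more complete justification: the bare assertion is not obvious, since membership in $\mathcal{D}$ is a statement in a quotient and Lemma~\ref{Idescription} alone does not immediately rule it out. Your construction is effectively the restriction to a single isotropy fibre of the isomorphism between $\Lc\rtimes_\alpha S$ and the Steinberg algebra of $S\ltimes X$, which is the natural tool here but is not available at this point in the paper. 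The verification that $\mathrm{ev}_x$ annihilates $\mathcal{N}$ and that $[s,x]\neq[s^*s,x]$ (using that $u\leq s^*s\in E(S)$ forces $u\in E(S)$) is correct.
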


\begin{proof}
We show the contrapositive statement. Suppose that $\theta$ is not topologically principal. Then there is $s\in S$ and there is a compact-open subset $B$ of $X_{s^*}$ such that $B\cap \Lambda_s(\theta)=\emptyset$.  This means that, for each $x\in B$,  $\theta_s(x)=x$  and there is no $e\in E(S)$ such that $e\leq s$ and $x\in X_e$ (equivalently, there is no $e\in E(S)$ such that $e\leq s^*$ and $x\in X_e$). Hence,  $\overline{1_B\delta_{s^*}} \notin \mathcal{D}.$ 

Let $e\in E(S)$ be arbitrary and take an arbitrary compact-open subset $D$ of $X_e$.
If $x\in B$, then
\[\alpha_{s^*}(\alpha_{s}(1_B)1_D)(x)= 1_B(x)1_D(\theta_{s}(x))=1_B(x)1_D(x)=(1_B1_D)(x).\]
And if $x\in X \setminus B$, then
\[\alpha_{s^*}(\alpha_{s}(1_B)1_D)(x)= 1_B(x)1_D(\theta_{s}(x))=0 =1_B(x)1_D(x)=(1_B1_D)(x).\]
Hence, $\alpha_{s^*}(\alpha_{s}(1_B)1_D)=1_B1_D$. Thus,
\[ \overline{1_D\delta_e} \cdot \overline{1_B\delta_{s^*}} 
 = \overline{1_D1_B\delta_{s^*}} = \overline{1_B1_D\delta_{s^*}}
 = \overline{\alpha_{s^*}(\alpha_{s}(1_B)1_D)\delta_{s^*}}
 = \overline{1_B\delta_{s^*}}\cdot \overline{1_D\delta_e}. \]

This implies that $\overline{1_B\delta_{s^*}}$ commutes with all elements of the diagonal $\mathcal{D}$, and hence $\mathcal{D} $ is not maximal commutative.

By Theorem~\ref{teo1}, for every non-zero ideal $\mathcal{I}$ of $\Lc\rtimes_{\alpha} S$ we have that $\mathcal{I}\cap\mathcal{D}\neq \{0\}$. Let $\overline{f}=\sum_{i=1}^nf_i\delta_{e_i} \in\mathcal{I}\cap\mathcal{D}$ be non-zero. By the isomorphism of the diagonal $\mathcal{D}$ with $\Lc$ we have that $\overline{f}=\overline{0}$ if, and only if, $\sum_{i=1}^nf_{e_i}=0$, and thus $\supp\left(\widetilde\tau\left(\overline{f}\right)\right)=\supp\left(\sum_{i=1}^nf_{e_i}\right)\neq \emptyset$.
\end{proof}


\begin{corollary}\label{cor:simples}
Let $S$ be a countable inverse semigroup, let $X$ be a second-countable, zero-dimensional and locally compact Hausdorff space, and let $R$ be a field.
Then the skew inverse semigroup ring $\Lc \rtimes_{\alpha} S$  is simple  if, and only if, the following three conditions are satisfied

\begin{itemize}
    \item $\theta$ is minimal,
    \item $\theta$ is topologically principal, and 
    \item  for every non-zero ideal $\mathcal{I}$ of $\Lc \rtimes_{\alpha} S$ there is some $\overline{f} \in \mathcal{I}$ such that $\supp(\overline{f})\neq \emptyset$.
\end{itemize}  
\end{corollary}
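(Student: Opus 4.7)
The plan is to deduce Corollary~\ref{cor:simples} by assembling three previously established results: Theorem~\ref{thm:main1} (the purely algebraic characterization of simplicity for $\Skew$), Proposition~\ref{lem:MinimalSsimple} (which translates $S$-simplicity of $\Lc$ into minimality of $\theta$), and Corollary~\ref{cor:topprincipalmaxcom} together with Proposition~\ref{lem:maxcommtopprincipal} (which together translate maximal commutativity of $\mathcal{D}$ in $\Lc\rtimes_\alpha S$ into topological principality together with the stated support/ideal condition). The corollary is essentially just a formal combination of these results, so I will not have to do any genuinely new calculation.

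For the forward implication, I would assume that $\Lc\rtimes_\alpha S$ is simple. Since $\Lc$ is commutative and associative, Theorem~\ref{thm:main1} applies and yields that $\Lc$ is $S$-simple and that $\mathcal{D}\cong \Lc$ is a maximal commutative subring of $\Lc\rtimes_\alpha S$. Proposition~\ref{lem:MinimalSsimple} then gives the minimality of $\theta$, and Proposition~\ref{lem:maxcommtopprincipal}, which is precisely the step that requires countability of $S$ and second-countability of $X$, delivers the remaining two conditions: that $\theta$ is topologically principal and that every non-zero ideal $\mathcal{I}$ of $\Lc\rtimes_\alpha S$ contains some $\overline{f}$ with $\supp(\widetilde\tau(\overline{f}))\neq\emptyset$.

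For the reverse implication, assume the three listed conditions. From minimality of $\theta$ and Proposition~\ref{lem:MinimalSsimple}, $\Lc$ is $S$-simple. From topological principality of $\theta$ together with the support condition on ideals, Corollary~\ref{cor:topprincipalmaxcom} yields that $\mathcal{D}$ is maximal commutative in $\Lc\rtimes_\alpha S$. Since $\Lc$ is commutative and associative, Theorem~\ref{thm:main1} then gives that $\Lc\rtimes_\alpha S$ is simple, as desired.

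There is no substantial obstacle — all the heavy lifting has already been done in the cited statements — but two small points deserve care in the write-up. First, the countability of $S$ and second-countability of $X$ are only needed in the forward direction, where they enter via Proposition~\ref{lem:maxcommtopprincipal}; the converse direction uses Corollary~\ref{cor:topprincipalmaxcom}, which does not require these hypotheses. Second, the condition ''there is $\overline{f}\in \mathcal{I}$ with $\supp(\overline{f})\neq \emptyset$'' appearing in the statement must be read as ''$\supp(\widetilde\tau(\overline{f}))\neq \emptyset$'', matching the formulation used in Proposition~\ref{prop:topprincipal} and Proposition~\ref{lem:maxcommtopprincipal}; with this reading, the cited propositions plug in directly and the proof reduces to the short chain described above.
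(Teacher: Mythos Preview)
Your proposal is correct and follows essentially the same approach as the paper, which simply cites Proposition~\ref{lem:MinimalSsimple}, Corollary~\ref{cor:topprincipalmaxcom}, Proposition~\ref{lem:maxcommtopprincipal} and Theorem~\ref{thm:main1}, and remarks (as you do) that countability of $S$ and second-countability of $X$ are only needed for the forward implication. Your observation that $\supp(\overline{f})$ must be read as $\supp(\widetilde\tau(\overline{f}))$ is also on point.
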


\begin{proof}
This follows from Proposition~\ref{lem:MinimalSsimple}, Corollary~\ref{cor:topprincipalmaxcom}, Lemma~\ref{lem:maxcommtopprincipal} and  Theorem~\ref{thm:main1}. Notice that for the "if" statement we do not need to use the fact that $S$ is countable and that $X$ is second-countable.
\end{proof}

\begin{corollary}
Let $S$, $X$ and $R$ be as in Corollary \ref{cor:simples}.
Suppose that $\theta=(\{\theta_s \}_{s\in S}, \{X_s\}_{s\in S})$ is a partial action such that the following three assertions hold: 
\begin{itemize}
    \item $\theta$ is topologically free 
    \item $\theta$ is minimal, and
    \item for every non-zero ideal $\mathcal{I}$ of $\Lc \rtimes_{\alpha} S$ there is $\overline{f} \in \mathcal{I}$ such that $\supp(\overline{f})\neq \emptyset.$
\end{itemize}
Then the skew inverse semigroup ring $\Lc \rtimes_{\alpha} S$  is simple.
\end{corollary}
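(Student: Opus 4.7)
The plan is to observe that this corollary is an immediate consequence of two results established earlier in the excerpt: Lemma~\ref{lem:topfreeimpliestopprincipal} and Corollary~\ref{cor:simples}. Since the hypotheses on $S$ and $X$ (countable inverse semigroup and second-countable, zero-dimensional, locally compact Hausdorff space) match precisely the hypotheses of Lemma~\ref{lem:topfreeimpliestopprincipal}, the assumption of topological freeness can be upgraded to topological principality at no cost.

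More concretely, I would proceed in two short steps. First, invoke Lemma~\ref{lem:topfreeimpliestopprincipal} on the hypothesis that $\theta$ is topologically free to conclude that $\theta$ is topologically principal. Second, observe that the three bulleted hypotheses needed in Corollary~\ref{cor:simples} are now all in place: minimality is assumed directly, topological principality was just deduced, and the ideal/support condition is assumed directly. Applying Corollary~\ref{cor:simples} then yields simplicity of $\Lc\rtimes_\alpha S$.

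There is no real obstacle here, as the substantive work has already been done in Proposition~\ref{lem:MinimalSsimple}, Corollary~\ref{cor:topprincipalmaxcom}, Lemma~\ref{lem:maxcommtopprincipal}, Theorem~\ref{thm:main1} (packaged together in Corollary~\ref{cor:simples}) and in Lemma~\ref{lem:topfreeimpliestopprincipal}. The only thing worth mentioning in the write-up is that the countability of $S$ and second countability of $X$ are genuinely used here, because they are needed for Lemma~\ref{lem:topfreeimpliestopprincipal} (which relies on the Baire-category argument of Proposition~\ref{prop:principalcountable}); without those hypotheses the example right after Lemma~\ref{lem:topfreeimpliestopprincipal} shows that topological freeness does not imply topological principality, so this implication of the corollary would fail.
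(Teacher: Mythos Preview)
Your proposal is correct and follows exactly the same approach as the paper's own proof, which simply invokes Lemma~\ref{lem:topfreeimpliestopprincipal} and Corollary~\ref{cor:simples}. Your additional commentary on the role of countability and second countability is accurate and helpful, though not part of the paper's proof.
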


\begin{proof}
This follows from Lemma~\ref{lem:topfreeimpliestopprincipal} and Corrollary~\ref{cor:simples}.
\end{proof}


\section{An application to Steinberg algebras}\label{Sec:Steinberg}

In this section we will apply our main result and obtain a new proof of the simplicity criterion for a Steinberg algebra $A_R(\G)$ associated with a Hausdorff and ample groupoid $\G.$ 

\subsection{Steinberg algebras}

Given a groupoid $\G,$ we denote its unit space by $\Go,$ and its source and range maps 
by $s$ and $r,$ respectively. A \emph{bisection} in $\G$ is a subset $B \subseteq \G$
such that the restrictions of $r$ and $s$ to $B$ are both injective.
A topological groupoid $\G$ is said to be \emph{\'{e}tale} if $\Go$ is locally compact and
Hausdorff, and its source map is a local homeomorphism from $\G$ to $\Go.$
An \'{e}tale groupoid $\G$ is said to be \emph{ample} if $\G$ has a basis of compact bisections.
One can show that a Hausdorff \'{e}tale groupoid is ample if, and only if, $\Go$ is totally disconnected.
In this article, we only consider groupoids which are both Hausdorff and ample. 

A subset $U$ of the unit space $\Go$ of $\G$ is \emph{invariant} if $s(b) \in U$ implies $r(b) \in U.$
We call $\G$ \emph{minimal} if $\Go$ has no nontrivial open invariant subset. 
We let $\Iso(\G)$ denote the \emph{isotropy subgroupoid} of $\G,$ that is, 
$ \Iso(\G) := \{ b \in \G \, \mid \, r(b)=s(b) \}.$
A Hausdorff and ample groupoid $\G$ is said to be \emph{effective} if the interior of $\Iso(\G)$ is $\Go,$ 
or equivalently, for every non-empty compact bisection $B \subseteq \G \setminus \Go,$ there exists some $b \in B$ such that $s(b)\neq r(b).$

Let $R$ be a commutative ring with identity and let $\G$ be a Hausdorff and ample groupoid. 
The \emph{Steinberg algebra} $A_R(\G)$ is the collection of compactly supported locally
constant functions from $\G$ to $R$ with pointwise addition, and
convolution product $(f * g)(b) = \sum_{r(c)=r(b)} f(c)g(c^{-1}b) = \sum_{cd=b}f(c)g(d).$
The support of $f \in A_R(\G),$ is denoted by $\supp(f)=\{ b \in \G \mid f(b)\neq 0\}$ (a clopen subset of $\G$).

By \cite[Proposition~4.3]{steinberg}, every element of $A_R(\G)$ is a linear combination of characteristic functions of pairwise disjoint compact bisections. 
Moreover, by \cite[Proposition~3.12.]{steinberg}, $A_R(\G)$ is a unital ring if, and only if, $\Go$ is compact.

Let $\G$ be a Hausdorff and ample groupoid. The set $\Ga$ of all compact bisections in $\G$ 
is an inverse semigroup under the operations defined by 
$ BC = \{bc \in \G \, \mid \,  b \in B, c \in C \ \mbox{and} \ s(b)=r(c) \},$
and $ B^{-1}= \{ b^{-1} \, \mid \, b \in B \}.$ 
The inverse semigroup partial order  in $\Ga$ is the inclusion of sets. Notice that $E(\Ga)=\{U \in \Ga \ \mid \  U \subseteq \Go \}.$

Given any compact bisection $B$ we define the map $\theta_B: s(B) \rightarrow r(B)$  by 
$\theta_B(u)= r_B(s_B^{-1}(u)).$
The correspondence $B \mapsto \theta_B$ gives an action of $\Ga$ on the unit space $\Go$.

From the  action $\theta$ of the semigroup $\Ga$ on the locally compact, Hausdorff, totally disconnected space $\Go,$ we get a corresponding  action $\alpha$ of the semigroup $\Ga$ on the $R$-algebra $\Lco$ of all locally constant, compactly supported, $R$-valued functions on $\Go,$ where $R$ is a unital and commutative ring. More precisely, for each $B \in \Ga,$ we have that $\alpha_B$ is an isomorphism from  $D_{B^*}=\{f \in \Lco  \ \mid \ \supp(f)\subseteq s(B)\} \simeq \mathcal{L}_c(s(B))$ onto $D_B=\{f \in \Lco  \ \mid \ \supp(f) \subseteq r(B)\} \simeq \mathcal{L}_c(r(B))$ which is defined by
\begin{displaymath}
	\alpha_B(f)(x)= \left\lbrace \begin{array}{ccr}
    f \circ \theta_{B^*}(x) & & \mbox{if} \,\, x \in r(B) \\
    0                        & & \mbox{if} \,\, x \notin r(B)  
    \end{array}\right..
\end{displaymath}

In \cite[Theorem~5.2]{beuter2} Beuter and Gon\c{c}alves showed that any Steinberg algebra can be seen as a skew inverse semigroup ring as follows.

\begin{theorem}\label{steinbergasisgsr}
Let $\G$ be a Hausdorff and ample groupoid, let $\theta$ be the action of the inverse semigroup $\Ga$ on the unit space $\Go,$ and let $\alpha$ be the corresponding  action of $\Ga$ on $\Lco.$ Then the Steinberg algebra $A_R(\G)$ is isomorphic, as an $R$-algebra, to the skew inverse semigroup ring $\Lgo$.
\end{theorem}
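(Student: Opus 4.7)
The plan is to construct an explicit $R$-algebra homomorphism $\Phi\colon \mathcal{L}\to A_R(\G)$ defined on homogeneous generators $f\delta_B$ (with $B\in\Ga$ and $f\in D_B=\mathcal{L}_c(r(B))$) by
\begin{displaymath}
\Phi(f\delta_B)(b)=\begin{cases} f(r(b)), & b\in B, \\ 0, & b\notin B,\end{cases}
\end{displaymath}
and extending additively. Since $r$ restricts to a homeomorphism from the compact bisection $B$ onto $r(B)\subseteq\Go$, the function $\Phi(f\delta_B)$ is locally constant and compactly supported in $B$, so it lies in $A_R(\G)$. The goal is to show that $\Phi$ vanishes on $\mathcal{N}$ and induces an isomorphism $\widetilde\Phi\colon\Lgo\to A_R(\G)$.

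First I would verify multiplicativity by a direct computation. For $f\delta_B$ and $g\delta_C$, the convolution $\Phi(f\delta_B)\ast\Phi(g\delta_C)$ is supported on $BC$, and at a point $b=cd\in BC$ with $c\in B$, $d\in C$, it takes the value $f(r(c))g(r(d))$. On the other hand, $(f\delta_B)(g\delta_C)=\alpha_B(\alpha_{B^*}(f)g)\delta_{BC}$ in $\mathcal{L}$; using the definition of $\alpha$ from $\theta_B$ together with the relation $r(c)=\theta_B(s(c))=\theta_B(r(d))$, one checks that $\Phi$ applied to this element agrees with the convolution. Next I would check $\Phi(\mathcal{N})=\{0\}$ via Lemma~\ref{Idescription}: on a generator $a\delta_B-a\delta_C$ with $B\subseteq C$ and $a\in D_B$, the two summands agree on $B$, and on $b\in C\setminus B$ we have $r(b)\in r(C\setminus B)$, which is disjoint from $r(B)\supseteq\supp(a)$ because $r$ is injective on the bisection $C$; hence $\Phi(a\delta_C)(b)=0$. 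Therefore $\Phi$ descends to an $R$-algebra homomorphism $\widetilde\Phi\colon\Lgo\to A_R(\G)$.

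Surjectivity is straightforward: by \cite[Proposition~4.3]{steinberg}, every element of $A_R(\G)$ is a finite $R$-linear combination of characteristic functions $1_B$ of pairwise disjoint compact bisections $B$, and $1_B=\widetilde\Phi(\overline{1_{r(B)}\delta_B})$.

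The main obstacle is injectivity. My strategy is a \emph{disjointification} argument: I would show that every element of $\Lgo$ admits a representative $\sum_{i=1}^n f_i\delta_{B_i}$ with $B_1,\ldots,B_n\in\Ga$ pairwise disjoint. Given an arbitrary representative $\sum_j g_j\delta_{C_j}$, one refines by splitting the finite collection $\{C_j\}$ into compact-open bisections using intersections, differences, and the order relation $C\subseteq C'$ in $\Ga$; Remark~\ref{rem:calc} allows one to replace each $g_j\delta_{C_j}$ by an equivalent sum indexed by a disjoint refinement, collecting terms on common pieces. Once we have a disjoint representative, suppose $\widetilde\Phi(\overline{\sum f_i\delta_{B_i}})=0$. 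Restricting to each $B_i$ (which is disjoint from the others), we obtain $f_i(r(b))=0$ for all $b\in B_i$, and since $r\colon B_i\to r(B_i)\supseteq\supp(f_i)$ is a bijection, this forces $f_i=0$. Hence each homogeneous summand is already $0$, so $\overline{\sum f_i\delta_{B_i}}=0$ in $\Lgo$, and $\widetilde\Phi$ is injective. (Alternatively, one can appeal directly to Lemma~\ref{intersection} once the representative is reduced to a single homogeneous term on each disjoint piece.)
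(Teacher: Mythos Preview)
Your proof is correct, and the map $\widetilde\Phi$ you build is exactly the isomorphism $\tilde\psi$ described in Remark~\ref{remarkvarphi} (the paper does not prove this theorem itself but imports it from \cite[Theorem~5.2]{beuter2}). The only substantive difference is in the injectivity step. You argue by first showing that every class in $\Lgo$ admits a representative $\sum_i f_i\delta_{B_i}$ with the $B_i$ pairwise disjoint compact open bisections (using clopenness of compact bisections in a Hausdorff ample groupoid together with Remark~\ref{rem:calc} to split along a common refinement), and then reading off $f_i=0$ from $\widetilde\Phi=0$ on each piece. The original proof instead constructs an explicit map $\varphi\colon A_R(\G)\to\Lgo$, sending $\sum_j b_j 1_{B_j}$ (with the $B_j$ pairwise disjoint) to $\sum_j \overline{b_j 1_{r(B_j)}\delta_{B_j}}$, and shows that $\varphi$ is a left inverse to $\tilde\psi$. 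The two strategies are essentially dual: the non-trivial content of the left-inverse approach is the well-definedness of $\varphi$, which requires precisely the same disjoint-refinement manipulation you carry out. The left-inverse route has the mild advantage of packaging injectivity and surjectivity together and giving for free the normal form used later (e.g.\ in the proof of Proposition~\ref{Sec4:PropEffective}); your approach is more direct and avoids checking that $\varphi$ is independent of the chosen decomposition.
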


\begin{remark}\label{remarkvarphi}
The isomorphism of Theorem~\ref{steinbergasisgsr} is given by the map $\tilde{\psi}:\Lgo \rightarrow A_R(\G)$, which is defined on elements of the form $\overline{f_B\delta_B}$, by
\begin{displaymath}
\tilde\psi(\overline{f_B\delta_B})(x) = \left\lbrace 
\begin{array}{cl}
    f_B(r(x)) &  \mbox{if} \,\, x \in B \\
            0 &  \mbox{if} \,\, x \notin B, 
\end{array}\right.
\end{displaymath}
and extended linearly to $\Lgo.$ 
In the proof of Theorem~\ref{steinbergasisgsr} it was shown that $\tilde{\psi}$ admits a left inverse, namely the map $\varphi : A_R(\G) \rightarrow \Lgo$ defined as follows: Given $f = \sum_{j=1}^n b_j 1_{B_j} \in A_R(\G),$ where the $B_j$'s are pairwise disjoint compact bisections of $\G,$ let
\begin{displaymath}
	\varphi(f) = \varphi\left(\sum_{i=1}^n b_j 1_{B_j}\right) :=\sum_{j=1}^n \overline{b_j 1_{r(B_j)}\delta_{B_j}}.
\end{displaymath}
Actually $\varphi$ is the inverse of $\tilde{\psi},$ and, in particular, it is bijective. 
By the surjectivity of $\varphi,$ given any $f \in \Lgo$ we can write 
\begin{displaymath}
	f= \sum_{j=1}^n  \overline{ b_j 1_{r(B_j)} \delta_{B_j} },
\end{displaymath}
where the $B_j$'s are pairwise disjoint compact bisections of $\G.$
Furthermore, by the injectivity of $\varphi$, if
\begin{displaymath}
	\sum_{j=1}^n  \overline{ b_j 1_{r(B_j)} \delta_{B_j} }= \sum_{j=1}^n  \overline{ c_j1_{r(C_j)} \delta_{C_j} } ,
\end{displaymath}
where the $B_j$'s and $C_j$'s are pairwise disjoint compact bisections, then
\begin{displaymath}
	\sum_{i=1}^n   b_i 1_{B_i} = \sum_{i=1}^n  c_i 1_{C_i}.
\end{displaymath}
\end{remark}

\subsection{Simplicity of Steinberg algebras}

Using that there is a description of Steinberg algebras via skew inverse semigroup rings (which satisfies the assumptions of the previous section), we can apply the results of the previous section to characterize simplicity of Steinberg algebras.
We then obtain a new proof of the following result, which was first proved in \cite{brownetal} for functions over the complex numbers.

\begin{theorem}\cite[Corollary~4.6.]{clarkediemichell}\label{thm:SteinbergSimplicity} Let $\G$  be a Hausdorff and ample groupoid, and let $R$ be a unital and commutative ring. Then the Steinberg algebra $A_R(\G)$ is simple if, and only if, $\G$ is effective, minimal, and $R$ is a field.
\end{theorem}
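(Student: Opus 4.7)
The strategy is to translate simplicity of $A_R(\G)$ into conditions on the skew inverse semigroup ring $\Lgo$ via the isomorphism of Theorem~\ref{steinbergasisgsr}, and then apply Theorem~\ref{thm:main1}. Since $\Lco$ is commutative, Theorem~\ref{thm:main1} characterises simplicity of $\Lgo$ by the two separate conditions: (a) $\Ga$-simplicity of $\Lco$, and (b) maximal commutativity of the diagonal $\mathcal{D} \cong \Lco$ in $\Lgo$. I will translate these into ``$\G$ is minimal'' and ``$\G$ is effective'' respectively, and separately extract the condition ``$R$ is a field''.

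For the condition on $R$: if $A_R(\G)$ is simple, then any proper nonzero ideal $I \triangleleft R$ would produce the proper nonzero two-sided ideal $\{f \in A_R(\G) \mid f(\G) \subseteq I\}$, a contradiction; so $R$ must be a field. Conversely, $R$ being a field is precisely the hypothesis required to apply Proposition~\ref{lem:MinimalSsimple}. Granted this, $\Ga$-simplicity of $\Lco$ is equivalent to minimality of the action $\theta$ on $\Go$, and a straightforward verification using ampleness shows that invariant subsets of $\Go$ for $\G$ coincide with those for $\theta$, so minimality of $\theta$ coincides with minimality of $\G$.

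The substantive remaining task is to show that $\G$ is effective if and only if $\mathcal{D}$ is maximal commutative in $\Lgo$. Suppose first that $\G$ is not effective. Since $\Go$ is clopen in $\G$, the open set $\Int(\Iso(\G)) \setminus \Go$ is nonempty, and by ampleness it contains a compact bisection $B$ disjoint from $\Go$. On $B$ one has $s=r$, so $\theta_B = \id_{s(B)}$; a direct computation shows that $\overline{1_{s(B)} \delta_B}$ commutes with every $\overline{1_U \delta_U}$ for $U \in E(\Ga)$, while its image $\tilde\psi\bigl(\overline{1_{s(B)} \delta_B}\bigr) = 1_B$ is supported outside $\Go$, and hence $\overline{1_{s(B)} \delta_B} \notin \mathcal{D}$. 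Conversely, assume $\G$ is effective and let $z \in \Lgo$ commute with $\mathcal{D}$. By Remark~\ref{remarkvarphi} we may write $z = \sum_i \overline{r_i 1_{r(B_i)} \delta_{B_i}}$ with $\{B_i\}$ pairwise disjoint compact bisections. Passing through $\tilde\psi$, commutation with $\overline{1_U \delta_U}$ for compact-open $U \subseteq \Go$ translates to the identity $\sum_i r_i 1_{UB_i} = \sum_i r_i 1_{B_i U}$ in $A_R(\G)$, which by disjointness of the $B_i$ gives $r_i\bigl(1_U(r(b)) - 1_U(s(b))\bigr)=0$ for every $b \in B_i$. For each $i$ with $r_i \neq 0$, letting $U$ range over all compact-open subsets of the Hausdorff totally disconnected space $\Go$ forces $r(b)=s(b)$, whence $B_i \subseteq \Iso(\G)$. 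Since $B_i$ is open and $\G$ is effective, $B_i \subseteq \Go$, so $z \in \mathcal{D}$.

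The main obstacle is this last equivalence. The general results of Section~\ref{topd} (Corollary~\ref{cor:topprincipalmaxcom} and Proposition~\ref{lem:maxcommtopprincipal}) go through topological principality and impose second-countability and countability hypotheses that are inappropriate for a general Hausdorff ample groupoid. The direct argument above bypasses these by exploiting the explicit form of the isomorphism $\tilde\psi$ and the combinatorial structure of compact bisections, for which the source and range maps give a clean criterion distinguishing diagonal elements from purely isotropic ones.
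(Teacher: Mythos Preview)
Your proposal is correct and follows essentially the same route as the paper: both reduce to Theorem~\ref{thm:main1} via Theorem~\ref{steinbergasisgsr}, both prove ``effective $\Leftrightarrow$ $\mathcal{D}$ maximal commutative'' by the same compact-bisection argument (you pass through $\tilde\psi$ a step earlier, which slightly streamlines the computation), and for ``minimal $\Leftrightarrow$ $\Ga$-simple'' you invoke Proposition~\ref{lem:MinimalSsimple} plus the identification of invariant sets, while the paper reproves this as Proposition~\ref{Sec4:PropMinimal}. Your explicit treatment of why $R$ must be a field is a point the paper leaves implicit.
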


Let $\mathcal{G}$ be a Hausdorff and ample groupoid, and let $R$ be a unital and commutative ring. Then $A_R(\G) \cong \Lgo$. Our first step towards a proof of the above theorem is to characterize minimality of $\G$ in terms of $\Ga$-simplicity of $\Lco$. We set up notation and prove an auxiliary result below.

We can now prove the following.

\begin{prop}\label{Sec4:PropMinimal}
Let $\G$ be a Hausdorff and ample groupoid, and let $R$ be a field. Then $\G$ is minimal if, and only if, $\Lco$ is $\Ga$-simple.
\end{prop}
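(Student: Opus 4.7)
The strategy is to reduce the statement to the already-proved Proposition~\ref{lem:MinimalSsimple} by showing that the action $\theta$ of $\Ga$ on $\Go$ constructed in this section fits the framework of that proposition, and that the two notions of ``minimality'' in play (minimality of the groupoid $\G$ versus minimality of the inverse-semigroup partial action $\theta$) coincide.

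First I would record the setting. Since $\G$ is Hausdorff and ample, the unit space $\Go$ is a totally disconnected, hence zero-dimensional, locally compact Hausdorff space, and $\theta=(\{\theta_B\}_{B\in\Ga},\{s(B)\}_{B\in\Ga})$ is a topological partial action of the inverse semigroup $\Ga$ on $\Go$ in the sense of Definition~\ref{def:partialactiontopological}; the axioms follow from the definition $\theta_B(u)=r_B(s_B^{-1}(u))$ together with $\Go=\bigcup_{U\in E(\Ga)}U$ (which uses ampleness to cover each unit by a compact-open subset of $\Go$). Hence Proposition~\ref{lem:MinimalSsimple} applies and yields: $\theta$ is minimal if, and only if, $\Lco$ is $\Ga$-simple (with respect to the associated action $\alpha$).

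The heart of the proof is therefore the identification of the two invariance notions: for an open subset $U\subseteq\Go$, the condition ``$s(b)\in U\Rightarrow r(b)\in U$ for every $b\in\G$'' should be equivalent to ``$\theta_B(U\cap s(B))\subseteq U$ for every $B\in\Ga$''. One direction is immediate from the definition of $\theta_B$: if $u\in U\cap s(B)$ then $u=s(b)$ for the unique $b\in B$ with $s(b)=u$, and $\theta_B(u)=r(b)$, so groupoid-invariance forces $\theta$-invariance. Conversely, suppose $U$ is $\theta$-invariant and $b\in\G$ satisfies $s(b)\in U$; using ampleness, pick a compact bisection $B\ni b$, so $s(b)\in U\cap s(B)$ and $r(b)=\theta_B(s(b))\in U$. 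Thus the open invariant subsets of $\Go$ in the two senses coincide, and in particular $\G$ is minimal if, and only if, $\theta$ is minimal.

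Combining the two equivalences above gives the result. I do not anticipate a real obstacle; the only point that deserves a sentence of care is checking that $\theta$ satisfies the nondegeneracy axiom $\Go=\bigcup_{e\in E(\Ga)} X_e$, which comes down to the fact that every point of $\Go$ lies in some compact-open subset of $\Go$, and such subsets are precisely the elements of $E(\Ga)$.
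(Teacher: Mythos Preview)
Your argument is correct, and it is a genuinely different route from the paper's. The paper proves the equivalence directly: given a non-zero $\Ga$-invariant ideal $\J$ of $\Lco$, it uses Lemma~\ref{lemmaideal} (here the field hypothesis enters) to write $\J=I(U)$ for an open $U\subseteq\Go$, and then checks by hand---picking a compact bisection $B$ through an arbitrary $x\in\G$ with $s(x)\in U$ and computing $\alpha_B(g)(r(x))$---that $U$ is invariant in the groupoid sense, forcing $U=\Go$; the converse is handled symmetrically. Your approach instead factors the argument through Proposition~\ref{lem:MinimalSsimple}: you observe that $\theta$ is a topological partial action of $\Ga$ on the zero-dimensional space $\Go$, so that proposition already gives the equivalence between $\theta$-minimality and $\Ga$-simplicity of $\Lco$, and you only need the (easy, ampleness-based) identification of $\theta$-invariant open sets with groupoid-invariant open sets. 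This buys you modularity and avoids repeating the ideal--open-set correspondence, while the paper's direct computation makes the use of $\alpha_B$ and the field hypothesis more visible. One cosmetic point: in your description of $\theta$ the domain data should be $X_B=r(B)$ (so that $X_{B^*}=s(B)$), not $\{s(B)\}_{B\in\Ga}$; this does not affect the argument since the two families of open sets coincide.
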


\begin{proof}
Suppose that $\G$ is minimal. Let $\J$ be a $\Ga$-invariant non-zero ideal of $\Lco.$ We know that 
\begin{displaymath}
	\J = \{ f \in \Lco \, \mid \, f(x)=0, \, \forall x \in \Go \setminus U \},
\end{displaymath}
where $U$ is an open subset of $\Go$ given by 
\begin{displaymath}
	U=\{u \in \Go \, \mid \, \exists f \in \J \, \mbox{ such that } \, f(u) \neq 0 \}.
\end{displaymath}
 
Notice that, since $\G$ is minimal, if we prove that $U$ is an invariant subset of $\Go$, then $U= \Go$ and hence $\J = \Lco$. We prove the invariance of $U$ below.

Let $x \in \G$ be such that $s(x) \in U.$ Then there exists a function $g \in \J$ such that $g(s(x)) \neq 0.$ Furthermore, we can take $x \in B$, where $B$ is a compact bisection of $\G.$ Since $U$ and $s(B)$ are open, we can consider
\begin{displaymath}
	g \in \J \cap D_{B^*} = \{f \in \Lco \, \mid \, \supp(f) \subseteq U \cap s(B) \}.
\end{displaymath}

Using that $\J$ is $\Ga$-invariant we get that $\alpha_B(g) \in \J$.
Notice that 
\begin{displaymath}
\alpha_B(g)(r(x)) = g(\theta_{B^*}(r(x)))
                  = g(s(r_B^{-1}(r(x))))
                  = g(s(x))
                  \neq 0.	
\end{displaymath}
Therefore, $r(x) \in U$ and hence $U$ is $\Ga$-invariant, as desired.

Now, suppose that $\Lco$ is $\Ga$-simple. Let $U \subseteq \Go$ be a non-empty invariant open subset. Consider the set
\begin{displaymath}
	\J = \{f \in \Lco \, \mid \, f(x)=0, \,\, \forall x \in \Go \setminus U \}.
\end{displaymath}

Clearly, $\J$ is an ideal of $\Lco.$ To see that $\J$ is $\Ga$-invariant, suppose that $B \in \Ga,$ $g \in \J \cap D_{B^*}$, and $x \in \Go \setminus U$. If $x \in r(B)$, then there exists some $y \in B$ such that $x=r(y)$, and hence 
\begin{displaymath}
\alpha_B(g)(x) = \alpha_B(g)(r(y))
               = g(s(r_B^{-1}(r(y))))
               = g(s(y)).
\end{displaymath}
Since $U$ is invariant, and $r(y)=x \notin U,$ we have that $s(y) \notin U$. Hence, $ g(s(y))= 0.$ If $x \notin r(B)$, then from the definition of $\alpha_B,$ we also have that $\alpha_B(g)(x)=0.$ Therefore, $\alpha_B(g) \in \J$, and hence $\J$ is $\Ga$-invariant. Using that $\Lco$ is $\Ga$-simple it follows that $\J=\Lco$ and $U=\Go.$ 
\end{proof}

\begin{prop}\label{Sec4:PropEffective}
Let $\G$ be a Hausdorff and ample groupoid.
Then $\G$ is effective if, and only if, 
\begin{displaymath}
	\mathcal{D} =\left\lbrace \sum_{i=1}^n\overline{f_i\delta_{U_i}} \ \mid \ n  \in \mathbb{N}, U_i \in E(\Ga) \ \mbox{and} \ f_i \in \mathcal{L}_c(r(U_i)) = \mathcal{L}_c(U_i) \right\rbrace \simeq \Lco
\end{displaymath}
is a maximal commutative subring of $\Lgo.$
\end{prop}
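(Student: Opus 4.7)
The plan is to transport the statement through the isomorphism $\tilde\psi : \Lgo \to A_R(\G)$ of Theorem~\ref{steinbergasisgsr}, under which, by Remark~\ref{remarkvarphi}, the diagonal $\mathcal{D}$ is carried onto the subalgebra $A_R(\Go) \subseteq A_R(\G)$ consisting of functions supported on the unit space. Hence it suffices to prove that $\G$ is effective if and only if $A_R(\Go)$ is maximal commutative in $A_R(\G)$.

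The key computation is to identify the centralizer of $A_R(\Go)$ in $A_R(\G)$. For $g \in A_R(\Go)$ and $f \in A_R(\G)$, the only factorization $b = cd$ with $c \in \supp(g) \subseteq \Go$ is $c = r(b)$, $d = b$, and dually for $f*g$; hence $(g*f)(b) = g(r(b))\,f(b)$ and $(f*g)(b) = f(b)\,g(s(b))$ for every $b \in \G$. Because $\Go$ is locally compact Hausdorff and totally disconnected, any two distinct points of $\Go$ can be separated by the characteristic function of a compact-open subset, which furnishes enough test functions $g \in A_R(\Go)$ to conclude that $f$ centralizes $A_R(\Go)$ if and only if $f(b) = 0$ whenever $s(b) \neq r(b)$, i.e.\ $\supp(f) \subseteq \Iso(\G)$.

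For the ``if'' direction, assume $\G$ is effective, so $\Int(\Iso(\G)) = \Go$. If $f \in A_R(\G)$ centralizes $A_R(\Go)$, then by the previous step $\supp(f) \subseteq \Iso(\G)$; since $f$ is locally constant into the ring $R$ equipped with the discrete topology, its support is open, and consequently $\supp(f) \subseteq \Int(\Iso(\G)) = \Go$, forcing $f \in A_R(\Go)$. For the ``only if'' direction I argue by contraposition: if $\G$ is not effective, then $\Iso(\G) \setminus \Go$ contains a non-empty open subset, and by ampleness of $\G$ this open subset contains a non-empty compact bisection $B$. Then $1_B \in A_R(\G)$ centralizes $A_R(\Go)$ by the criterion above, yet $\supp(1_B) = B \not\subseteq \Go$ forces $1_B \notin A_R(\Go)$; hence maximal commutativity fails.

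The main delicate point will be the separation argument inside the centralizer computation: I need to verify carefully that the total disconnectedness of $\Go$ genuinely supplies a separating family of characteristic functions inside $A_R(\Go)$, so that the pointwise identity $g(r(b))f(b) = f(b)g(s(b))$, as $g$ ranges over $A_R(\Go)$, truly pins down $\supp(f) \subseteq \Iso(\G)$ rather than something weaker.
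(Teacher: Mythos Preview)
Your proof is correct, and it takes a genuinely different route from the paper's.

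The paper works almost entirely inside the skew inverse semigroup ring $\Lgo$: for the forward direction it picks a centralizing element $\sum_i \overline{r_i 1_{r(B_i)}\delta_{B_i}}$, assumes some $b\in B_k$ has $r(b)\neq s(b)$, chooses a compact-open $U\subseteq\Go$ separating $r(b)$ from $s(b)$, and then grinds through the commutation relation $\overline{1_U\delta_U}\cdot f=f\cdot\overline{1_U\delta_U}$ in the skew-ring notation, eventually invoking the injectivity of $\varphi$ from Remark~\ref{remarkvarphi} to reach a contradiction. For the converse it again computes directly in $\Lgo$ that $\overline{1_{r(B)}\delta_B}$ commutes with the diagonal when $B\subseteq\Iso(\G)\setminus\Go$.

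You instead transport everything through $\tilde\psi$ to $A_R(\G)$ at the outset and prove the clean characterization that the centralizer of $A_R(\Go)$ consists exactly of the functions supported on $\Iso(\G)$, using the convolution identities $(g*f)(b)=g(r(b))f(b)$ and $(f*g)(b)=f(b)g(s(b))$. Both directions then fall out in one line each. This is essentially the classical Steinberg-algebra argument (as in \cite{Ben} or \cite{brownetal}); it is shorter and more conceptual. What the paper's approach buys is that it illustrates how such results can be proved largely within the skew-ring formalism itself, consistent with the paper's broader aim of recovering Steinberg-algebra facts from Theorem~\ref{thm:main1}; note, though, that even the paper's proof has to pass through the isomorphism $\varphi$ at one step, so it is not purely internal. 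Your ``delicate point'' about separation is unproblematic: since $\G$ is Hausdorff and ample, $\Go$ is totally disconnected, locally compact and Hausdorff, so for $r(b)\neq s(b)$ one has a compact-open $U\subseteq\Go$ with $r(b)\in U$ and $s(b)\notin U$, and $g=1_U\in A_R(\Go)$ does the job. One small point worth making explicit in the contrapositive direction is that $\Go$ is closed in a Hausdorff \'etale groupoid, so that $\Int(\Iso(\G))\setminus\Go$ is genuinely open and hence contains a compact bisection by ampleness.
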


\begin{proof}
Suppose that $\G$ is effective. 
We already know that $\mathcal{D}$ is a commutative subring.

Let $0 \neq f = \sum_{i=1}^n  \overline{ r_i 1_{r(B_i)} \delta_{B_i} } \in \Lgo,$ 
where $r_i \in R\setminus \{0\}$ and the $B_i$'s are pairwise disjoint compact bisections 
of $\G$ for all $i \in \{1, \ldots, n\}.$ Suppose that $f$ is an element which commutes
with all elements of $\mathcal{D} .$ We need to show that $f\in \mathcal{D} $.

By the effectiveness of $\G$ it suffices to show that $B_i \subseteq \Iso(\G)$ for every $i \in \{1, \ldots, n\}$ 
(since $B_i$ is open and $B_i \subseteq \text{int} (\Iso(\G))=\Go$).  

To this end, suppose that there exists some $k \in \{1, \ldots, n\}$, and $b \in B_k$, such that $r(b) \neq s(b).$ 
Since $\G$ is Hausdorff, there exists a compact bisection $U \subseteq \Go$ such that 
$r(b) \in U$ and $ s(b) \notin U.$ Notice that $U \in E(\Ga).$

Using that $f$ belongs to the centralizer of $\mathcal{D}$ we have that 
\begin{displaymath}
	\overline{1_U\delta_U} \cdot f = f \cdot \overline{1_U\delta_U}.
\end{displaymath}
This implies that 
\begin{displaymath}
\sum_{i=1}^n \overline{r_i 1_U 1_{r(B_i)}\delta_{UB_i}}  = \sum_{i=1}^n \overline{r_i \alpha_{B_i} (\alpha_{B_i^*}(1_{r(B_i)})1_U)\delta_{B_iU}}.	
\end{displaymath}

Since $UB_i, B_iU \subseteq B_i,$ for all $i \in \{1, \ldots, n \},$ we get that
\begin{equation}\label{eqcommutative}
\sum_{i=1}^n \overline{r_i 1_U 1_{r(B_i)}\delta_{B_i}}  = \sum_{i=1}^n \overline{r_i \alpha_{B_i} (\alpha_{B_i^*}(1_{r(B_i)})1_U)\delta_{B_i}}.
\end{equation}

Developing the left side of \eqref{eqcommutative} we obtain
\begin{displaymath}
	\sum_{i=1}^n\overline{r_i 1_U 1_{r(B_i)}\delta_{B_i}}=\sum_{i=1}^n\overline{r_i 1_{U \cap r(B_i)}\delta_{B_i}}.
\end{displaymath}
For each $i \in \{1,\ldots,n\}$, define $C_i:=r_{B_i}^{-1}(U \cap r(B_i)).$ Notice that $C_i \subseteq B_i$ and $r(C_i)=U \cap r(B_i).$
Thus 
\begin{equation}\label{eqleftside}
\sum_{i=1}^n\overline{r_i 1_U 1_{r(B_i)}\delta_{B_i}} = \sum_{i=1}^n\overline{r_i 1_{U\cap r(B_i)}\delta_{B_i}}=  \sum_{i=1}^n\overline{r_i 1_{r(C_i)}\delta_{C_i}}.                                   
\end{equation}

Now, developing the right side of \eqref{eqcommutative} we get that
\begin{align*}
\sum_{i=1}^n \overline{r_i \alpha_{B_i} (\alpha_{B_i^*}(1_{r(B_i)})1_U)\delta_{B_i}} 
       & = \sum_{i=1}^n \overline{r_i \alpha_{B_i} (1_{s(B_i)}1_U)\delta_{B_i}}
        = \sum_{i=1}^n \overline{r_i \alpha_{B_i} (1_{s(B_i)\cap U})\delta_{B_i}} \\
       & = \sum_{i=1}^n \overline{r_i 1_{r(B_i)\cap \theta_{B_i}(s(B_i) \cap U)} \delta_{B_i}}.
\end{align*}
 
Define $D_i:=r_{B_i}^{-1} (r(B_i)\cap \theta_{B_i}(s(B_i) \cap U)).$ Notice that $D_i \subseteq B_i$ and $r(D_i)=r(B_i)\cap \theta_{B_i}(s(B_i) \cap U).$
Then 
\begin{equation}\label{eqrightside}
\sum_{i=1}^n \overline{r_i \alpha_{B_i} (\alpha_{B_i^*}(1_{r(B_i)})1_U)\delta_{B_i}} = \sum_{i=1}^n \overline{r_i 1_{r(D_i)} \delta_{D_i}}.
\end{equation}

By substituting \eqref{eqleftside} and \eqref{eqrightside} into Equation~\eqref{eqcommutative} we obtain that
\begin{displaymath}
	\sum_{i=1}^n\overline{r_i 1_{r(C_i)}\delta_{C_i}} = \sum_{i=1}^n \overline{r_i 1_{r(D_i)} \delta_{D_i}}.
\end{displaymath}

Since $C_i \subseteq B_i$, for each $i\in \{1,\ldots,n\}$, we have that the $C_i$'s are pairwise disjoint compact bisections, and similarly the $D_i$'s are also pairwise disjoint compact bisections. By Remark~\ref{remarkvarphi} we have that
\begin{displaymath}
	\sum_{i=1}^n b_i 1_{C_i} = \sum_{i=1}^n b_i 1_{D_i} .
\end{displaymath}

Next we evaluate the above equality on the element $b$ of $B_k$ such that $r(b)\neq s(b)$. Since the $B_i$'s are pairwise disjoint we have that $b \notin C_i, b \notin D_i$ for $i \neq k$ and hence
\begin{equation}\label{eqeffective}
b_k 1_{C_k}(b) =  b_k 1_{D_k}(b).
\end{equation}
Notice that
\begin{displaymath}
	b \in C_k=r_{B_k}^{-1}(U \cap r(B_k)) \Longleftrightarrow r(b) \in U \cap r(B_k),
\end{displaymath}
and
\begin{align*}
b \in D_k & \Longleftrightarrow b \in  r_{B_k}^{-1} (r(B_k)\cap \theta_{B_k}(s(B_k) \cap U)) \\
          & \Longleftrightarrow r(b) \in r(B_k)\cap \theta_{B_k}(s(B_k) \cap U) \\
          & \Longleftrightarrow r(b) \in r(B_k)\ \mbox{and} \ r(b) \in r(s_{B_k}^{-1}(s(B_k) \cap U)) \\
          & \stackrel{b \in B_k}{\Longleftrightarrow} r(b) \in r(B_k) \ \mbox{and} \ b \in s_{B_k}^{-1}(s(B_k) \cap U) \\
          & \Longleftrightarrow r(b) \in r(B_k) \ \mbox{and} \ s(b) \in s(B_k) \cap U \\
          & \stackrel{b \in B_k}{\Longleftrightarrow} r(b) \in r(B_k) \ \mbox{and} \ s(b) \in U.
\end{align*}

Recall that, by construction, $b\in C_k$ and $s(b)\notin U$. Thus, Equation~\eqref{eqeffective} yields $b_k =0,$ a contradiction.
Therefore, $r(b)=s(b),$ $b \in \Iso(G)$ and $B_i \in  \Iso(G)$ as desired. \\

In order to prove the converse we show the contrapositive statement. Suppose that $\G$ is not effective. Then
there exists a bisection $B \subseteq \G \setminus \Go$ such that  $s(b)=r(b)$ for all $b \in B.$ 

Recall that $\theta_B: s(B)\rightarrow r(B)$ is defined by $r(u)=r(s_B^{-1}(u)).$ Thus, in this case, $\theta_B(s(b))=r(s_B^{-1}(s(b)))=r(b)=s(b),$ that is, $\theta_B=\id_{s(B)}.$ Similarly, $\theta_{B^*}=\id_{r(B)}.$ This implies that $\alpha_B = \id_{D_{B^*}}$ and $\alpha_{B^*} = \id_{D_B}.$  

Notice that $\overline{1_{r(B)} \delta_B} \notin \mathcal{D} .$ 
Take any $\overline{f\delta_U} \in \mathcal{D} .$ Then 
\begin{align*}
\overline{f\delta_U} \cdot \overline{1_{r(B)} \delta_B} \quad & = \quad\quad \overline{f 1_{r(B)} \delta_{UB} }
                                                        \quad\quad \stackrel{\mathclap{UB \subseteq B}}{=} \quad\quad \overline{f 1_{r(B)} \delta_{B} } \quad
                                                        = \quad \overline{1_{r(B)}  f \delta_B} \\
                                                        & \stackrel{\mathclap{r(B)=r(UB)}}{=} \quad\quad \overline{1_{r(B)}  f \delta_{BU}} 
                                                        \quad = \quad \overline{\alpha_B(\alpha_{B^*}(1_{r(B)} f )) \delta_{BU}}
                                                        \quad = \quad \overline{1_{r(B)} \delta_B} \cdot  \overline{f\delta_U},
\end{align*}
that is, $\overline{1_{r(B)} \delta_B}$ commutes with all of $\mathcal{D} $. This shows that $\mathcal{D} $ is not maximal commutative.
\end{proof}

\begin{remark} Since $\mathcal{D} $ is isomorphic to $\mathcal{L}_c(\Go) \simeq A_R(\Go)$ it follows from Proposition~\ref{Sec4:PropEffective}, and Theorem~\ref{teo1}, that $\G$ is effective if, and only if, $A_R(\G_0)$ is maximal commutative if, and only if, every non-zero ideal $I$ of $A_R(\G)$ has non-zero intersection with $A_R(\Go)$. The characterization of effectiveness in terms of the ideal intersection property was first given in \cite{Clark2} and that effectiveness of $\G$ implies maximal commutativeness of $A_R(\G_0)$ was first proven in \cite{Ben}.
\end{remark}

In order for us to apply Theorem~\ref{thm:main1} we need to verify that the assumption about the local units is satisfied. In fact, for any finite subset $\{f_1, \cdots, f_n\}$ of $\mathcal{L}_c(\Go)$ consider $U=\bigcup_{i=1}^n \supp(f_i)$. Clearly,  $1_U \in \mathcal{L}_c(\Go)$ and this element is local unit for $\{f_1, \cdots, f_n\}$. Moreover, $1_{r(B)}$ and $1_{s(B)}$ are multiplicative identity elements in $D_B$ and $D_{B^*},$ respectively.

\begin{remark}
The proof of Theorem~\ref{thm:SteinbergSimplicity}
follows from
Theorem~\ref{thm:main1},
Proposition~\ref{Sec4:PropMinimal}
and Proposition~\ref{Sec4:PropEffective}.
\end{remark}

\end{document}